\documentclass{amsart}
\usepackage[left=1in,right=1in,top=1.2in,bottom=1in]{geometry}
\usepackage[utf8]{inputenc}
\usepackage{setspace}
\usepackage{amsmath,amssymb}
\usepackage{comment}
\usepackage{xcolor}
\usepackage{url}
\usepackage[colorlinks=true,allcolors=blue,backref=page]{hyperref}
\usepackage[noabbrev,capitalize,nameinlink]{cleveref}
\usepackage{bbm}
\usepackage{centernot}
\usepackage{extarrows}
\usepackage[shortlabels]{enumitem}
\allowdisplaybreaks

\newtheorem{thm}{Theorem}

\newtheorem{lem}[thm]{Lemma}

\newtheorem{prop}[thm]{Proposition}
\newtheorem{claim}[thm]{Claim}
\theoremstyle{remark}
\newtheorem{remark}{Remark}

\newcommand{\R}{\mathbb{R}}
\newcommand{\Z}{\mathbb{Z}}
\newcommand{\Prob}{\mathbb{P}}
\newcommand{\eps}{\varepsilon}
\newcommand{\1}{\mathbbm{1}}
\renewcommand{\epsilon}{\eps}
\renewcommand{\P}{\mathbb{P}}

\newcommand{\E}{\mathbb{E}}
\newcommand{\ind}{\mathbbm{1}}
\newcommand{\ring}{\mathring}
\newcommand{\lam}{\lambda}

\newcommand{\dg}{d_{G_\lam}}

\newcommand{\xlra}{\xleftrightarrow{G_\lam}}
\nocite{*}

\title{Lipschitz continuity of the time constant for continuum percolation}
\author{Karoline Dubin}
\address{University of Illinois, Chicago. Dept of Mathematics, Statistics and Computer science.}
\email{kdubin3@uic.edu}

\author{Christian Gorski}
\address{University of Washington. Dept of Mathematics.}
\email{cgorski1@uw.edu}

\begin{document}

\begin{abstract}


    We consider the Boolean model of continuum percolation, where points are placed in $\R^d$ by a Poisson point process and pairs of points with distance at most 1 are connected by an edge. 
    The time constant is the 
    limiting ratio
    of the chemical distance
    (i.e. graph distance) to the Euclidean 
    distance for pairs of distant connected points.
    Yao, Chen, and Guo \cite{YCG11} 
    established the existence of a time constant
    in the supercritical regime. We show that above the critical intensity, 
    the time constant is a Lipschitz continuous function
    of the intensity.
    The proof adapts
    a recent argument
    of Can, Nakajima, and Nguyen \cite{CNN23}
    to the continuous setting.
\end{abstract}
\maketitle

\section{Introduction}

Given a point set $\mathcal{X} \subset \R^d$, we denote by $G(\mathcal{X})$ the graph with vertex set $\mathcal{X}$ and edge set consisting of $\{x,y\} \subset \mathcal{X}$ such that $\|x-y\|\le 1$. Let $X_\lambda$ denote the homogeneous Poisson point process of intensity $\lambda$ on $\R^d$. We consider the infinite random geometric graph $G(X_\lambda)$, which we abbreviate by $G_\lam$.  
The critical intensity $\lam_c = \lam_c(d)$ is defined by 
\[
\lam_c = \inf \left\{\lam>0 : 
\Prob(G_\lam \mbox{ contains an infinite component)}
 > 0 \right \}.
\]
When $d \ge 2$, $0<\lam_c<\infty$, and if $\lam > \lam_c$ (the supercritical regime) then $G_\lam$ almost surely has a unique infinite component,
which we denote by $C_\infty$.

We define a natural metric on $X_\lam \subset G_\lam$ by taking the distance between two vertices to be the least number of edges in a path between them. 
For all $x,y \in G_\lam$, the \emph{chemical distance} between $x$ and $y$ is
\[
d_{G_\lam}(x,y) := \inf\{|\pi|: \pi \text{ is a path in } G_\lam \text{ from } x \text{ to } y\}
\]
where $|\pi|$ is the number of edges in $\pi$. 
Yao, Chen, and Guo  \cite{YCG11} 
prove the existence of a \emph{time constant} for the chemical distance in supercritical continuum percolation: for each $\lam > \lam_c(d)$, there exists a constant $\mu_\lam(d) \ge 1$ such that 
\[
\lim_{x \rightarrow \infty} \frac{D_\lam(0,x)}{\|x\|} =
\lim_{x \rightarrow \infty} \frac{\E[D_\lam(0,x)]}{\|x\|} 
= \mu_\lam(d) \qquad \text{ a.s.}
\]
where $D_\lam(x,y) := d_{G_\lam}(\ring{x},\ring{y})$ and $\ring{x}$ is the random point of the infinite cluster $C_\infty$ of $G_\lam$ that is closest to $x$
in Euclidean distance.
Informally, this means that the chemical distance 
between far away points on an infinite supercritical cluster is very likely approximately $\mu_\lam$ times the Euclidean distance. 

Our main result concerns the continuity of the time constant for continuum percolation. We prove the time constants are Lipschitz continuous in $\lam$ in the following sense.

\begin{thm}\label{Lipschitz continuity}
 Fix $d\ge 2$. For all $\lam_0>\lam_c(d)$, there exists $C(\lam_0,d)$ such that 
\[
   |\mu_{\lam}(d) - \mu_{\lam'}(d)|
   \le C(\lam_0,d)|\lam-\lam'|
\]
for each $\lam,\lam' \in [\lam_0,\infty)$.
\end{thm}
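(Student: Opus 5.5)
We couple all intensities through superposition: for $\lam<\lam'$, write $X_{\lam'} = X_\lam \cup Y$, where $Y$ is an independent Poisson process of intensity $\lam'-\lam$. Adding points only adds edges, so $\mu_{\lam'}\le\mu_\lam$ should follow once we handle the change in the anchor points $\ring{x}$ (the anchor shifts by an $O(1)$ chemical distance with high probability). The content of the theorem is therefore the reverse bound
\[
\mu_\lam-\mu_{\lam'}\le C(\lam_0,d)(\lam'-\lam), \qquad \lam_0\le \lam<\lam'.
\]
Since $\mu$ is monotone, it suffices to prove this for $\lam'-\lam\le\delta_0$ with $\delta_0$ small and the constant uniform over $\lam\ge\lam_0$. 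For large $\lam$ we need care: for $\lam\ge$ some $\Lambda$, the time constant is within $O(\lam^{-\alpha})$ of its limit, and there monotonicity plus a crude derivative bound suffice. Alternatively, we run the argument on a compact interval $[\lam_0,\Lambda]$ and handle the tail separately.

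Following Can--Nakajima--Nguyen, we take a $\lam'$-geodesic $\gamma$ from $\ring{0}$ to $\ring{x}$, with $|\gamma|\approx\mu_{\lam'}\|x\|$, and repair it in $G_\lam$.
\begin{enumerate}[(i)]
\item Each vertex of $\gamma$ lying in $Y$ is a \emph{bad} point. Coarse-grain space into boxes of side $L$, and call a box \emph{good} for $\lam$ if it satisfies the usual uniqueness and crossing conditions at intensity $\lam$, together with a local chemical-distance bound: any two points of the crossing cluster within the box are joined in $G_\lam$ by a path of length $\le K$ staying in the $3L$-enlargement. For $\lam\ge\lam_0$, good boxes occur with probability $\ge 1-\epsilon(L)$ uniformly in $\lam$, with finite-range dependence.
\item Replace each maximal segment of $\gamma$ around a bad point by a detour through the $\lam$-cluster in nearby good boxes. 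If all boxes are good, each bad point costs $O(K)$ extra edges. Otherwise, use Antal--Pisztora type estimates, adapted to the continuum via renormalisation, to bound the detour around a connected bad region of size $s$ by $Cs$.
\end{enumerate}

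This yields
\[
D_\lam(0,x)\le |\gamma| + C\sum_{v\in\gamma\cap Y}(1+s_v),
\]
where $s_v$ is the size of the bad box cluster near $v$. The key estimate is then
\[
\E\,|\gamma\cap Y|\le C(\lam'-\lam)\|x\|.
\]
Heuristically, each point of $X_{\lam'}$ is a $Y$-point with probability $(\lam'-\lam)/\lam'$. The difficulty is that $\gamma$ is selected depending on $Y$, so the geodesic could prefer $Y$-points.

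I expect this estimate to be the main obstacle. To handle it, I would avoid working with a fixed geodesic and instead use a union bound over lattice animals or paths. Discretise $\gamma$ into a sequence of $L$-boxes; there are at most $e^{c\|x\|}$ such box sequences of length $O(\|x\|)$. For a fixed box sequence, the number of $Y$-points in the boxes is Poisson with mean $(\lam'-\lam)L^d\cdot O(\|x\|)$. A Chernoff bound then gives that the number exceeds $A(\lam'-\lam)\|x\|$ only with probability $e^{-c'A\|x\|}$, which beats the entropy $e^{c\|x\|}$ once $A$ is large. This step is delicate when $(\lam'-\lam)$ is tiny, since the Poisson mean is then small relative to the entropy. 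To fix this, I would bound instead
\[
\sum_{\text{boxes hit}} |Y\cap \text{box}|\cdot\ind\{\text{box contains a }Y\text{-point}\},
\]
controlling the number of boxes containing $Y$-points along any path through a greedy lattice animal bound. For an i.i.d.\ field in which each site contains a $Y$-point with probability $p\approx(\lam'-\lam)L^d$, the maximal weight of a lattice animal of size $n$ is $O(n\,p^{1/d})$, which is not linear in $p$. The correct linear dependence should instead come from the CNN trick: compare $\E D_\lam$ with $\E D_{\lam'}$ via a Russo-type formula for Poisson processes (Margulis--Russo/Mecke). Writing
\[
\frac{d}{d\lam}\E D_\lam(0,x)=\int_{\R^d}\E\bigl[D_\lam(0,x)-D^{+z}_\lam(0,x)\bigr]\,dz,
\]
the integrand is nonzero only if $z$ lies within distance $1$ of a point used by some geodesic. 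Its size is bounded by the local repair cost, which has exponential tails uniformly in $\lam\ge\lam_0$ by step (ii). Integrating over $z$ then gives $\bigl|\tfrac{d}{d\lam}\E D_\lam(0,x)\bigr|\le C\,\E|\gamma|\le C'\|x\|$. Dividing by $\|x\|$ and letting $x\to\infty$, using the $L^1$ convergence of $\E D_\lam(0,x)/\|x\|$ to $\mu_\lam$, yields the Lipschitz bound.
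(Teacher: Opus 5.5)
Your final route (a Russo/Mecke formula, then a local repair cost for each geodesic vertex) has the same skeleton as the paper's. The problem is that the decisive estimate is asserted, not proved.

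\emph{Main gap.} From the fact that each individual repair cost has exponential tails uniformly in $\lam\ge\lam_0$, you conclude that the sum of repair costs along the geodesic is at most $C\,\E|\gamma|$. This does not follow. The geodesic $\gamma$ is determined by the same configuration as the repair costs, so it may run preferentially through regions where repairs are expensive. For instance, Palm theory plus Cauchy--Schwarz only gives $\lam\int \E_q[Z_q^2]^{1/2}\,\P_q(q\in\gamma)^{1/2}\,dq$. This need not be $O(\|x\|)$, since $\int \P_q(q\in\gamma)^{1/2}dq$ is not controlled by $\int\P_q(q\in\gamma)\,dq$ when the geodesic wanders.

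The missing ingredient is a greedy lattice animal bound for \emph{dependent} weights. The paper proceeds in three steps.
\begin{enumerate}
\item It dominates the detours on the geodesic by $C\sum_{\mathbf z\in\widetilde\Pi}Y_{\mathbf z}$. Here $Y_{\mathbf z}$ is the size of the $*$-closure of the black clusters around $\mathbf z$ in an $R$-renormalised site field, and the domination uses the Antal--Pisztora short-path lemma and the volume bound for geodesics inside a set.
\item It checks that $\{Y_{\mathbf z}=N\}_{\mathbf z\in S}$ is independent for $AN$-separated $S$, and that $\sum_N N^{d+2}q_N^{1/d}\le B$ uniformly in $\lam\ge\lam_0$ (via Liggett--Schonmann--Stacey domination).
\item It applies Lemma 2.7 of \cite{CNN23}, together with a stretched-exponential tail bound on the length of the discretised geodesic.
\end{enumerate}
You considered lattice animals earlier only for counting $Y$-points, where they indeed fail to give linear dependence. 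They are needed instead for the repair-cost field.

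\emph{Second gap.} You apply the Russo formula to $D_\lam(0,x)$ itself. This functional is unbounded and depends on the whole configuration through membership in $C_\infty$. It is also not monotone: adding a point can attach a finite cluster near $0$ to $C_\infty$, move $\ring{0}$, and \emph{increase} the distance. So your integrand does not vanish away from the geodesic.

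Moreover, in the additive form the relevant geodesic is that of $G_\lam\cup\{z\}$, not $\gamma$. You need Mecke's formula to rewrite $\int dz$ as $\lam^{-1}\E\sum_{q\in X_\lam}$ of removal effects. This is also what produces the uniform factor $1/\lam$ and makes your separate treatment of large $\lam$ unnecessary.

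The paper handles these issues as follows.
\begin{enumerate}
\item It replaces $D_\lam$ by the bounded, nonincreasing, finite-range functional $\widetilde D_\lam(0,x)$. This penalises the endpoints' distances from $0$ and $x$ by $M=(\log\|x\|)^2$ per unit, capped at $M\|x\|$.
\item It proves $\E|D_\lam-\widetilde D_\lam|=o(\|x\|)$.
\item When a geodesic vertex $q$ is deleted, it uses a deletion-tolerant uniqueness event $A'$ to guarantee $p(q)\leftrightarrow s(q)$ in $G_\lam\setminus\{q\}$.
\item Vertices within $M$ of the endpoints are bounded crudely by $M(M+1)$.
\end{enumerate}
Without some such device, your local repair cost need not even be finite, since in the removal form $q$ may be a cut vertex.
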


Interestingly, the proof actually yields that there is some $C(\lam_0,d)$ such that
$|\mu_\lam(d) - \mu_{\lam'}(d)| \le C(\lam_0,d)|\log(\lam/\lam')|$.
That is, the function $t \mapsto \mu_{e^t}$
is also Lipschitz on $[t_0, \infty)$
for $t_0 > \log \lam_c$.

\subsection{Proof sketch}
In order to prove Lipschitz continuity of $\mu_{\lambda}$,
it suffices to prove a bound of the form 
$|\E D_{\lam}(0,x) - \E D_{\lam'}(0,x)| \le C\|x\||\lam - \lam'|$, where $\|x\|$ is the Euclidean distance.
Here and below the constant $C$ is allowed to depend on $\lam_0$ but
not $\lam, \lam',$ or $x$.
First, we define another quantity $\widetilde{D}_{\lambda}(0,x)$
which approximates $D_\lam(0,x)$ well, so that it will suffice to prove a bound of the form
$|\E \widetilde{D}_{\lam}(0,x) - \E \widetilde{D}_{\lam'}(0,x)| \le C\|x\||\lam - \lam'|$.
This random variable $\widetilde{D}_{\lam}(0,x)$ will have the advantage that it is a bounded monotone function
of the Poisson process $X_{\lam}$ restricted to a finite volume region of $\R^d$.
This implies that $\E \widetilde{D}_{\lambda}(0,x)$ is differentiable in $\lam$
(in fact we prove a Russo-type formula for the derivative) and so it will
suffice for us to prove a bound of the form
$ \frac{d}{d\lam} \E \widetilde{D}_{\lam}(0,x) \le C \|x\|$.
The Russo-type formula will allow us to write the left-hand side as a sum over points of the Poisson process, where each summand corresponds to the effect on $\widetilde{D}_{\lam}(0,x)$ of
removing that point.
We will see that the nonzero summands are the points which lie on the geodesic $\widetilde{\pi}$ --- a sequence of points of $X_{\lam}$ which ``realize'' $\widetilde{D}_{\lam}(0,x)$.
Then, by constructing an appropriate high-probability event, we will see that this sum can be upper bounded (up to $o(\|x\|)$ error) by 
\[ \frac{1}{\lam} \E \left[ \sum_{q \in \widetilde{\pi}} d_{G_{\lam} \setminus \{q\}}(p(q), s(q)) \ind \left\{ d_{G_{\lam} \setminus \{q\}} (p(q), s(q)) < \infty \right\} \right], \]
where $p(q)$ and $s(q)$ are respectively the vertices immediately preceding and succeeding $q$ in the geodesic $\widetilde{\pi}$.
These summands have good tails
by Lemma 3.4 of Yao, Chen, and Guo \cite{YCG11},
and they enjoy a particular spatial
independence property. 
Moreover, the length of $\widetilde{\pi}$ is with
very high probability at most linear in $\|x\|$.
Therefore,
after an appropriate discretization, 
we can apply the lattice animals method of Can, Nakajima, and Nguyen \cite{CNN23}
to conclude that this sum is at most linear in $\|x\|$, 
as desired.

\section{Previous Work}

The first results on time constants
appear in the context of First Passage Percolation (FPP) on graphs, where
edges of a graph are given i.i.d. random weights, and the random metric considered is the minimum total weight of a path between two points; this metric is called the \emph{passage time}.
The chemical distance of percolation can be viewed as a special case of the passage time where all weights lie in $\{1,\infty\}$ (although many results in FPP assume
that the weights are almost surely finite, or an even stronger integrability condition).

The first regularity result for the time constant for FPP on $\Z^2$ appeared in Cox \cite{Cox80} under a uniform integrability assumption on the weight distribution. 
Still restricted to dimension two, Cox and Kesten \cite{CK81} removed the integrability condition by considering geodesics for truncated passage times. They introduced the idea of bypassing edges with weights that are too large, and established control over the length of these bypasses. Kesten \cite{Kest86} extended this result to $\Z^d, d \ge 2$ by allowing the truncation to be large enough to control the size of the clusters of closed edges. However, this technique still does not allow for edges with infinite weight. A new approach by Garet, Marchand, Procaccia, and Th\'eret  \cite{GMPT17} established continuity of the map from the underlying distribution to the time constant for general laws on $[0,+\infty]$ without any moment assumption.
Dembin \cite{Demb18} improved the regularity result to a log-Lipschitz result for a generalized percolation model in which first-passage percolation occurs in a random environment.
Building upon this work, Cerf and Dembin \cite{CD21} refined the approach by considering a multi-scale renormalization and proved Lipschitz continuity of the time constant in generalized first-passage percolation. 
The main ideas were to prove the average size of a bypass is small and to properly choose the different scales of the renormalization process. 

Keeping the same approach of looking at bypasses in ``good'' boxes under the same model and coupling, Can, Nakajima, and Nguyen \cite{CNN23} also established Lipschitz continuity of the time constant in generalized FPP. The insight here is to use lattice animals and a one-step renormalization with Russo's formula to bound the length of the detour caused by removing an edge in the geodesic.

Our proof is inspired by \cite{CNN23} but differs in a few ways. 
First, the definition of our approximation $\widetilde{D}_\lam(0,x)$
differs from that of the truncated passage time $T^{\Lambda_K}_M$
in \cite{CNN23}. In particular, $\widetilde{D}_\lam(0,x)$ only ``uses closed edges'' at the beginning and end of the path, which simplifies some technical arguments.\footnote{To be sure, many of these simplifications are possible because our setting is simpler in the sense that every edge has length 1, whereas in \cite{CNN23}, each edge has a random length.}
Next, arguments about the ``effective radius'' are replaced
by assuming uniqueness events across the whole relevant region in order to ensure connectivity of the endpoints of the geodesic even when an interior point is removed;
we then  
directly bound the length of a bypass around a deleted site assuming this robust connectivity.\footnote{These adaptations are similar to the method pursued in \cite{GorskiProcaccia} by Procaccia and the second author in the setting of graphs of polynomial growth. \cite{GorskiProcaccia} was written concurrently with this article.}
Finally,
as the space we consider is continuous, we must introduce a discretization to apply the lattice animal method in \cite{CNN23}; some care must be taken here to maintain the desired independence property when bundling points together.

\section{Preliminaries}

We recall some results for point processes and continuum percolation. 
Recall that throughout, $X_\lam$ is the Poisson point process of intensity $\lam$ on $\R^d$, and $G_\lam = G(X_\lam)$
is the associated geometric random graph.

The following result will be used repeatedly in order to bound expectations of sums indexed by points of our Poisson process.

\begin{thm}[Theorem 4.1, \cite{LP17}]\label{Palm theory} \emph{[Palm theory for Poisson processes.]}
For all non-negative bounded and measurable functions $f$ we have \begin{align*}
\E\left[\sum_{x_1,x_2,...,x_k \in X_\lam} f(x_1,x_2,...,x_k,X_\lam) \right] &= \lam^k \int_{\R^d} \E\left[f(x_1,...,x_k,X_\lam \cup \{x_1,...,x_k\}) \right]\,dx \ \\
&= \lam^k \int_{\R^d} \E^{\lam}_{x_1,...,x_k}\left[f(x_1,...,x_k,X_\lam) \right]\,dx. 
\end{align*}
\end{thm}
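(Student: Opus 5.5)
The plan is to prove the case $k=1$ directly from the explicit construction of the Poisson process, and then get general $k$ by induction. Throughout, the sum is read as running over $k$-tuples of \emph{distinct} points of $X_\lam$; this is the form in which the identity holds. The integral is over $(x_1,\dots,x_k)\in(\R^d)^k$.

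\emph{Step 1: $k=1$, bounded window.} First take $f(x,\mathcal{X})$ that vanishes unless $x\in B$ for a fixed bounded Borel set $B$, and depends on $\mathcal{X}$ only through $\mathcal{X}\cap B$ and $\mathcal{X}\setminus B$. Condition on $N:=|X_\lam\cap B|\sim\mathrm{Poisson}(\lam|B|)$. Given $N=n$, the points in $B$ are $n$ i.i.d.\ uniform points $U_1,\dots,U_n$, independent of $Y:=X_\lam\setminus B$. By exchangeability,
\[
\E\Big[\sum_{x\in X_\lam} f(x,X_\lam)\Big]=\sum_{n\ge1}e^{-\lam|B|}\frac{(\lam|B|)^n}{n!}\, n\,\E\big[f(U_1,\{U_1,\dots,U_n\}\cup Y)\big].
\]
Shifting the index $n\mapsto n+1$ turns $\frac{(\lam|B|)^{n+1}}{(n+1)!}(n+1)$ into $\lam|B|\cdot\frac{(\lam|B|)^n}{n!}$. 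Integrating out $U_1$ against $|B|^{-1}dx$ then gives
\[
\lam\int_B \E\big[f(x,X_\lam\cup\{x\})\big]\,dx,
\]
since the remaining $n$ uniform points together with $Y$ have the law of $X_\lam$.

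\emph{Step 2: extend to general $f$.} Both sides define measures in $f$. A monotone class / $\pi$-$\lambda$ argument extends the identity from product-type functions on windows to all bounded measurable $f$ supported in $\{x\in B\}$. Next, exhaust $\R^d$ by an increasing sequence of balls and apply monotone convergence; this removes the restriction $x\in B$ and handles all non-negative measurable $f$. The second equality in the theorem is then just the definition $\E^\lam_{x_1,\dots,x_k}[g(X_\lam)]:=\E[g(X_\lam\cup\{x_1,\dots,x_k\})]$ of the Palm expectation.

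\emph{Step 3: induction on $k$.} Write the sum over distinct $k$-tuples as
\[
\sum_{x_1\in X_\lam} g(x_1,X_\lam),\qquad g(x_1,\mathcal{X}):=\sum_{\substack{x_2,\dots,x_k\in\mathcal{X}\setminus\{x_1\}\\ \text{distinct}}} f(x_1,\dots,x_k,\mathcal{X}).
\]
The $k=1$ case converts this into
\[
\lam\int \E\big[g(x_1,X_\lam\cup\{x_1\})\big]\,dx_1 .
\]
For Lebesgue-a.e.\ $x_1$ we have $x_1\notin X_\lam$, so $g(x_1,X_\lam\cup\{x_1\})$ is a sum over distinct $(k-1)$-tuples of $X_\lam$ of the function $(x_2,\dots,x_k,\mathcal{X})\mapsto f(x_1,x_2,\dots,x_k,\mathcal{X}\cup\{x_1\})$. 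Applying the induction hypothesis to this function, for each fixed $x_1$, and then Fubini/Tonelli (justified by non-negativity) gives the factor $\lam^k$ and the point set $X_\lam\cup\{x_1,\dots,x_k\}$.

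\emph{Main obstacle.} The only delicate point is measurability. One needs $x_1\mapsto\E[g(x_1,X_\lam\cup\{x_1\})]$ to be measurable and the monotone class step to be carried out on the space of locally finite counting measures. I would handle this by working with the standard $\sigma$-field generated by the counting variables $\mathcal{X}\mapsto|\mathcal{X}\cap A|$ and checking that the relevant maps are jointly measurable. These measurability checks are routine in this framework, which is why the paper simply cites \cite{LP17}.
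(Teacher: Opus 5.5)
Your proof is correct. The paper does not prove this statement; it simply cites \cite{LP17}. What you have written is the standard self-contained derivation of the multivariate Mecke formula:
\begin{itemize}
\item the $k=1$ identity via the mixed-binomial computation on a bounded window, extended by monotone convergence;
\item induction on $k$, splitting off one coordinate of a distinct $k$-tuple.
\end{itemize}
Compared with the citation, the main thing your version buys is that it states the identity in its correct form.

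That matters here, because your two interpretive choices are necessary for the identity to hold, and the paper's display is loose on both.
\begin{itemize}
\item The sum must run over \emph{distinct} tuples. For instance, take $f(x_1,x_2,\mathcal{X})=\1\{x_1=x_2\}\1_B(x_1)$ with $B$ bounded. If diagonal pairs are allowed, the left side is $\lam\,\mathrm{vol}(B)$, while the right side is $0$.
\item The integral is over $(\R^d)^k$ against $dx_1\cdots dx_k$, not over $\R^d$.
\end{itemize}
This is harmless for the paper. Every application there is either $k=1$, or $k=2$ with a summand that vanishes when the two points coincide, since the chemical distance from a point to itself is $0$.

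Two small remarks on your write-up.
\begin{itemize}
\item The monotone-class part of Step 2 is unnecessary. Your Step 1 computation already applies, via Tonelli, to every non-negative measurable $f$ with $f(x,\cdot)=0$ for $x\notin B$. The restriction you place on how $f$ depends on $\mathcal{X}$ is vacuous, and boundedness of $f$ plays no role.
\item In Step 3, the precise fact you need is that for each fixed $x_1$ one has $x_1\notin X_\lam$ almost surely. This gives $(X_\lam\cup\{x_1\})\setminus\{x_1\}=X_\lam$ a.s.\ inside the expectation, before you integrate over $x_1$.
\end{itemize}
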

In the second line, $\E^\lam_{x_1,...,x_k}$ denotes
expectation with respect to the
modified probability measure\footnote{There is a sense in which $\P^\lam_{x_1,...,x_k}$ is 
the law of $X_\lam$
conditional on $x_1,...,x_k \in X_\lam$. The spatial independence of Poisson processes implies that this conditional distribution is simply equal to the law
of $X_\lam \cup \{x_1,...,x_k\}$. We do not strictly need the interpretation of $\P_{x_1,...,x_k}^\lam$ as a conditional probability,
but the notation will be convenient  in some cases.} $\P^\lam_{x_1,...,x_k}$ defined by
\begin{equation} \label{eq:Palm-distribution}
    \P^\lam_{x_1,...,x_k}(X_\lam \in A)
    := \P(X_\lam \cup \{x_1,...,x_k\} \in A).
\end{equation}

We will use a few facts about supercritical continuum percolation that Yao, Chen, and Guo established in \cite{YCG11}. The first is that it is unlikely that the origin is far from the giant component; the second is that it is unlikely to have a large component that is not part of the infinite component.

\begin{lem}[Lemma 3.3, Yao, Chen, and Guo~\cite{YCG11}] \label{Yao o close in infinite comp}
Suppose $\lambda > \lambda_c$. Then there exists a constant $c=c(\lam,d)>0$ such that for all large $r$,
\[
    \Prob(B(0,r) \cap C_\infty = \emptyset) \le \exp(-c r ^{d-1}).
\]  
\end{lem}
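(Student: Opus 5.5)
The plan is a block renormalization (coarse-graining) argument. It compares the Boolean model at a fixed $\lam>\lam_c$ with a highly supercritical site percolation on a rescaled lattice, and then uses a Peierls-type counting of separating surfaces. The exponent $r^{d-1}$ then appears as the minimal size of such a surface enclosing a region of diameter of order $r$.

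\emph{Step 1 (good boxes).} Fix a large scale $L$ and tile $\R^d$ by the boxes $Q_z = Lz + [0,L)^d$, $z\in\Z^d$. Call $Q_z$ \emph{good} if two things hold. First, the enlarged box $3Q_z$ (the union of $Q_z$ and its $3^d-1$ neighbours) contains a component of $G(X_\lam\cap 3Q_z)$ that crosses $3Q_z$ in every coordinate direction. Second, this crossing component is the unique component of diameter at least $L/10$ in $3Q_z$. The key input, which is where supercriticality $\lam>\lam_c$ is used, is
\[
\P(Q_z \text{ good}) \to 1 \quad\text{as } L\to\infty .
\]
This is the continuum analogue of the Penrose--Pisztora / Grimmett--Marstrand slab results: existence and uniqueness of a large crossing cluster in a box with probability tending to one. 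I expect this to be the main obstacle, since it is not a formal consequence of $\lam>\lam_c$ alone. It needs the continuum slab-percolation argument, or the known result $\lam_c=\lam_c(\text{slab})$ for the Boolean model. I would try to import it from the continuum-percolation literature (Penrose's work on large deviations of the giant cluster) rather than reprove it.

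\emph{Step 2 (domination and gluing).} Goodness of $Q_z$ depends only on $X_\lam\cap 3Q_z$, so the good-box field is $2$-dependent. By Liggett--Schonmann--Stacey it therefore stochastically dominates Bernoulli site percolation on $\Z^d$ with parameter $p(L)\to1$. Moreover, if $Q_z$ and $Q_{z'}$ are adjacent good boxes, their crossing components both cross the common region $3Q_z\cap 3Q_{z'}$. By the uniqueness clause they must be connected to each other in $G_\lam$. Hence every infinite nearest-neighbour path of good boxes yields an infinite connected subgraph of $G_\lam$, which is contained in $C_\infty$. That subgraph meets each of the boxes on the path, within distance $O(L)$ of them.

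\emph{Step 3 (Peierls bound).} Let $A$ be the set of $z$ with $Q_z\subset B(0,r/2)$; for large $r$ this set contains a discrete cube of side $m\asymp r/L$. On the event $\{B(0,r)\cap C_\infty=\emptyset\}$, Step 2 shows that no box of $A$ belongs to an infinite good cluster. Let $F$ be the union of $A$ with all finite good clusters touching $A$; then $F$ is finite. The outer vertex boundary of the finite connected set $F$ consists of bad boxes, and it is $*$-connected in $\Z^d$ (Deuschel--Pisztora, Tim\'ar). By the discrete isoperimetric inequality it has at least $c\,m^{d-1}$ sites. It also separates $A$ from infinity, so it meets the axis ray from $A$ within distance $n$ if it has size $n$. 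The number of $*$-connected sets of size $n$ through a given site is at most $e^{C_d n}$, so there are at most $n\,e^{C_d n}$ candidate surfaces. Each of them consists entirely of bad boxes with probability at most $(1-p(L))^n$. Choosing $L$ large so that $(1-p(L))e^{C_d}\le e^{-2}$ gives
\[
\P\bigl(B(0,r)\cap C_\infty=\emptyset\bigr)\le \sum_{n\ge c m^{d-1}} n e^{-2n}\le \exp(-c' r^{d-1}),
\]
with $c'$ depending on $\lam$ and $d$ through $L$. This holds for all large $r$, which is the claim of Lemma~\ref{Yao o close in infinite comp}.
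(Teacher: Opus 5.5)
The paper gives no proof of this lemma; it quotes it from \cite{YCG11}. Inside the paper the logic actually runs the other way: \cref{white likely} is deduced from this lemma. So your Step~1 has to be imported from outside the paper, and you do that correctly: Penrose--Pisztora's continuum box estimates, which for $d\ge 3$ rest on Tanemura's continuum slab theorem. There is therefore no circularity. Given that input, your argument is a correct, essentially self-contained proof of the classical \cite{AP96} coarse-graining type. The gluing in Step~2 works if you use the crossing of $3Q_z$ in the direction of adjacency: its part inside $3Q_z\cap 3Q_{z'}$ has diameter about $2L$, likewise for $z'$, and uniqueness in $3Q_z$ merges the two. Domination comes from \cite{LSS1997}.

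Your route needs only the qualitative statement $\P(Q_z\text{ good})\to1$, and it produces the surface-order exponent $r^{d-1}$ from your own isoperimetric and Peierls count. There is a shorter route if you import more. Take the surface-order lower large-deviation bound of Penrose--Pisztora for the size of the largest component of $G(X_\lam\cap B)$, in a box $B\subset B(0,r)$ of side $\asymp r$. On $\{B(0,r)\cap C_\infty=\emptyset\}$, that component is either abnormally small, with probability $e^{-cr^{d-1}}$. Or it lies in a finite component of $G_\lam$ of volume $\asymp r^d$ meeting $B$, and \cref{Yao no large component not in infinite} bounds this by $\lam|B|e^{-cr^{d-1}}$. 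This avoids contour counting, at the cost of a stronger imported theorem.

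One step in Step~3 needs a small repair. The outer boundary of your $F$ need not consist of bad boxes. A good site $w\notin A$ can be adjacent only to bad sites of $A$ and belong to an infinite good cluster that never meets $A$. Such a $w$ is not in $F$ but lies on its outer boundary. Your statement that no box of $A$ lies in an infinite good cluster does not exclude this.

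The fix is free. For every $w$ within $\ell^\infty$-distance $1$ of $A$ you still have $3Q_w\subset B(0,r)$ once $r\ge CL$. So the Step~2 argument also rules out, on the event, infinite good clusters adjacent to $A$. Now take $F$ to be $A$ together with all good clusters meeting or adjacent to $A$; these are all finite on the event. Every site outside $F$ that is adjacent to $F$ is then bad.

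The rest of your argument then goes through unchanged:
\begin{itemize}
\item $*$-connectivity of the outer boundary;
\item at least $m^{d-1}$ boundary sites, by projecting along a coordinate axis;
\item rooting on the axis ray;
\item the bound $n e^{C_d n}(1-p(L))^n$.
\end{itemize}
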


\begin{lem}[Lemma 3.1, Yao, Chen, and Guo~\cite{YCG11}]
\label{Yao no large component not in infinite}

Suppose that $\lambda > \lambda_c$. Then there exists a constant $c=c(\lam,d)>0$ such that, for any Borel set  $A$ in $\R^d$ and for all large $k$, we have
\[
    \Prob(\text{there exists a component } C \subset G_\lam \cap [-s/2,s/2]^d, k \le |C| < \infty, C \cap A \ne \emptyset) \le \lambda |A| \exp(-ck^{(d-1)/d}).
\]
    
\end{lem}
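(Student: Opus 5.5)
The plan is to reduce the statement, via Palm theory, to a one-point estimate on the cluster of a fixed point, and then to prove that estimate with a coarse-grained Peierls argument.

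\textbf{Step 1: reduction to a single point.} If the event occurs, some point $x \in X_\lam \cap A$ lies in a finite component of size at least $k$. By Markov's inequality and Theorem~\ref{Palm theory} with $k=1$,
\[
\Prob(\cdots) \le \E\Big[\sum_{x \in X_\lam \cap A} \ind\{k \le |C(x)| < \infty\}\Big] = \lam \int_A \P^\lam_x\big(k \le |C(x)|<\infty\big)\,dx .
\]
Here $C(x)$ is the component of $x$ in $G_\lam$ (or in $G_\lam$ restricted to the box; the restriction only changes which clusters count as finite, and the argument below handles both). By translation invariance it then suffices to show
\[
\P^\lam_0\big(k \le |C(0)|<\infty\big) \le \exp(-c k^{(d-1)/d})
\]
for all large $k$.

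\textbf{Step 2: dense clusters are unlikely.} Split according to the spatial extent of $C(0)$. Fix a small $\eps>0$ and set $r = (\eps k)^{1/d}$. Suppose $C(0)$ has at least $k$ points but is contained in $B(0,r)$. Then $B(0,r)$ contains at least $k-1$ Poisson points, while the expected number is of order $\lam\eps k$. For $\eps$ small (depending on $\lam$), a Chernoff bound gives probability at most $\exp(-c k)$, which is more than enough.

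\textbf{Step 3: spread-out finite clusters.} In the remaining case $C(0)$ is finite and reaches Euclidean distance at least $r \asymp k^{1/d}$ from $0$. Here I use a static renormalization. Tile $\R^d$ by cubes of a large side $L$. Call a cube \emph{good} if, in the concatenation of the cube with its neighbours, two things hold. First, there is a unique cluster crossing it in every direction. Second, every cluster of diameter at least $L/10$ in that region is connected to the crossing cluster within the region.

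The standard supercritical input, namely that $\lam>\lam_c$ implies slab percolation and uniqueness of crossings (as in Penrose--Pisztora for the Boolean model), gives $\Prob(\text{cube bad}) \to 0$ as $L\to\infty$. The good-cube events are finite-range dependent, so by Liggett--Schonmann--Stacey the bad cubes are dominated by a Bernoulli field with density $p(L)\to0$.

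Consider a finite cluster of diameter $\ge r$. Look at the cubes it meets, together with the region they enclose, which has volume at least of order $(r/L)^{d}$ cubes counting the filled-in set. The outer $*$-boundary of this set must consist entirely of bad cubes, since a good cube on the boundary would connect $C(0)$ to the crossing cluster and hence, by the uniqueness in good cubes, to $C_\infty$. By the discrete isoperimetric inequality this boundary is a $*$-connected set of at least $c(r/L)^{d-1}$ cubes surrounding the origin's cube.

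A Peierls count then bounds the probability by $\sum_{n\ge c(r/L)^{d-1}} (C_d\, p(L))^{n}\cdot(\text{polynomial})$. Choosing $L$ large enough that $C_d\,p(L)<1/2$, this is at most $\exp(-c'(r/L)^{d-1}) = \exp(-c'' k^{(d-1)/d})$.

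\textbf{Main obstacle.} The delicate point is Step 3's deterministic claim that a finite cluster bordering a good cube is impossible. This requires the good-cube definition to encode both the existence of a crossing cluster and the fact that large clusters merge into it. It also requires proving, in the continuum, that good cubes have probability tending to one for every $\lam>\lam_c$, which is a slab/renormalization result. I would import this from the continuum analogue of Grimmett--Marstrand (Penrose--Pisztora) rather than reprove it. Summing Steps 2 and 3 and inserting the result into Step 1 gives the bound $\lam|A|\exp(-ck^{(d-1)/d})$.
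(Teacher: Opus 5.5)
Your Step~1 is the right reduction, and it is valid when $C$ is read as a finite component of $G_\lam$. The paper gives no proof of this lemma; it quotes it from \cite{YCG11}. The standard route is exactly your Palm reduction followed by the known Penrose--Pisztora surface-order tail $\P^\lam_0(k\le |C(0)|<\infty)\le e^{-ck^{(d-1)/d}}$, which you could simply cite. The gap is in Steps~2--3, where you try to reprove that tail.

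In Step~3 you assert that the cubes met by a finite cluster of diameter at least $r$, together with the region they enclose, number at least of order $(r/L)^d$. That is false. A thin, tentacle-like cluster of diameter $r$ meets only of order $r/L$ cubes and encloses nothing, so both its filled-in coarse set and its outer boundary have only of order $r/L$ sites. From diameter alone, the Peierls sum gives only $\exp(-c\,r/L)=\exp(-c'k^{1/d})$. This matches $k^{(d-1)/d}$ only when $d=2$. Step~2 does not cover these clusters, because it only disposes of clusters confined to $B(0,r)$. A cluster of diameter about $r$ can carry its $k$ points in a tube of volume about $rL^{d-1}\ll k$; that is indeed unlikely, but for the Poisson-density reason of Step~2, not the Peierls reason of Step~3.

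\textbf{Fix.} Split by the number $m$ of $L$-cubes that $C(0)$ meets, not by its Euclidean extent.
\begin{itemize}
\item If $m\le \eps k/L^d$, the other $k-1$ points of $C(0)$ lie in a $*$-connected set of at most $\eps k/L^d$ cubes containing the origin's cube, of total volume at most $\eps k$. Take a union bound over the at most $C_d^{m}$ such sets against the Poisson tail $\P(\mathrm{Poi}(\lam\eps k)\ge k-1)\le \exp(-(k-1)\log\frac{k-1}{e\lam\eps k})$. This gives $e^{-ck}$ for $\eps$ small.
\item If $m\ge \eps k/L^d$, the filled-in set has at least $m$ sites. By isoperimetry its outer boundary has at least $c\,m^{(d-1)/d}\ge c(\eps k)^{(d-1)/d}L^{-(d-1)}$ sites, all bad, and your Peierls count then gives the required bound.
\end{itemize}

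\textbf{Two smaller repairs in the deterministic step.}
\begin{itemize}
\item A point of $C(0)$ in a neighbour of the good cube $Q$ can sit on the outer face of the $3L$-region, where its local cluster may be tiny. The good event must therefore be posed on a strictly larger region, as with the paper's $U_{z,R}$, which looks out to $[-5R,5R]^d$.
\item The contradiction should not go through $C_\infty$, since the crossing cluster of a single good cube need not lie in $C_\infty$. Instead, $C(0)$ merges with the cluster crossing $Q$ and hence meets $Q$, contradicting that $Q$ lies outside the set of cubes met by $C(0)$.
\end{itemize}

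Finally, your parenthetical claim that the argument also handles components of the restricted graph $G(X_\lam\cap[-s/2,s/2]^d)$ is wrong. There every component is finite. Pieces of $C_\infty$ cut off by the box boundary keep the probability bounded away from $0$ for $A$ near that boundary when $s\gg k^{1/d}$, so the bound fails. $C$ must be a finite component of $G_\lam$, which is how the lemma is used for $A_2$.
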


We also use the following local uniqueness event proven in \cite{Pen03}.

\begin{prop}[Proposition 10.13,   \cite{Pen03}]\label{largest component in metric diameter}

Let $\lam_0 >\lam_c$. Suppose ($\phi_s,s\ge 0)$ is increasing with ($\phi_s/\log s) \rightarrow \infty$ as $s \rightarrow \infty$, and with $\phi_s \le s$ for all $s$. Let $E_s$ denote the event that
at most one component of $G_\lam \cap [-s/2,s/2]^d$
has metric diameter at least $\phi_s$.
Then 
\[ \limsup_{s \rightarrow \infty} \sup_{\lam \ge \lam_0} \phi_s^{-1} \log \P^\lam[E_s^c]<0.\]
\end{prop}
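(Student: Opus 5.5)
The plan is a one-step coarse-graining (renormalization) argument. We compare the continuum model with a highly supercritical dependent site percolation on a lattice of mesoscopic boxes, and extract the exponential bound in $\phi_s$ from a Peierls-type count.

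\textbf{Step 1: good boxes.} Fix a large side length $L$. Tile $[-s/2,s/2]^d$ by boxes $Q_z = Lz + [0,L)^d$, and let $\hat Q_z$ be the concentric box of side $3L$. Call $z$ \emph{good} if two things hold. First, $G_\lam\cap\hat Q_z$ contains a component crossing $\hat Q_z$ in every coordinate direction. Second, every component of $G_\lam\cap \hat Q_z$ of Euclidean diameter at least $L/10$ belongs to that crossing component. Goodness of $z$ depends only on $X_\lam\cap\hat Q_z$, so the good-box field is $2$-dependent. The key input is
\[
\sup_{\lam\ge\lam_0}\P^\lam(z \text{ not good}) \to 0 \quad\text{as } L\to\infty.
\]
Goodness is not monotone in $\lam$, so uniformity has to be obtained by splitting the range of $\lam$.
\begin{itemize}
\item For $\lam\ge\Lambda$ with $\Lambda$ large, we use a trivial criterion. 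Goodness holds whenever every subcube of side $(2\sqrt d)^{-1}$ inside $\hat Q_z$ contains a point, because then everything in $\hat Q_z$ is connected. Since the number of subcubes grows only polynomially in $L$ while each is empty with probability at most $e^{-c\lam}$, this criterion fails with probability at most $CL^d e^{-c\Lambda}$. Choosing $\Lambda$ of order $\log L$ (or $\Lambda=\Lambda(L)$ increasing in $L$) makes this small.
\item For $\lam\in[\lam_0,\Lambda]$ we use the standard supercritical box-crossing and uniqueness estimates, namely Grimmett--Marstrand-type slab results in the continuum (Penrose's earlier chapters), together with Lemma~\ref{Yao no large component not in infinite}. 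These give a bound that is uniform over this range. The uniformity comes from continuity in $\lam$ of these finite-volume probabilities together with monotone comparison of the crossing parts.
\end{itemize}
I expect this uniform good-box estimate to be the main obstacle. It is where the genuine supercritical input, uniqueness of crossing clusters in boxes, enters.

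\textbf{Step 2: domination and Peierls.} By Liggett--Schonmann--Stacey, for $L$ large the good field stochastically dominates Bernoulli site percolation on $\Z^d$ with parameter $p(L)\to1$, uniformly in $\lam\ge\lam_0$. Any component of $G_\lam\cap[-s/2,s/2]^d$ with metric diameter at least $\phi_s$ has Euclidean diameter at least $\phi_s$, because each edge has length at most $1$. Such a component therefore meets a $*$-connected chain of at least $m=\phi_s/(dL)$ boxes. Let $F$ be the event that every $*$-connected set of $m$ boxes in the region consists mostly of good boxes that lie in a single $*$-connected cluster of good boxes. By a Peierls count over the at most $(s/L)^d$ starting boxes and the at most $C^m$ lattice animals per start, $\P(F^c)\le (s/L)^d e^{-c m}$ for $L$ large.

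\textbf{Step 3: conclude.} On $F$, consider a large component meeting a good box $Q_z$. Since it has Euclidean diameter at least $L/10$ inside $\hat Q_z$, it coincides with the crossing cluster of $\hat Q_z$. Adjacent good boxes have overlapping enlargements, so their crossing clusters must meet: the crossing component of one enlarged box crosses the overlap, hence has diameter at least $L/10$ in the other, and is therefore absorbed into the other box's crossing cluster. Hence all crossing clusters along one $*$-connected good cluster are the same component, and any two components of metric diameter at least $\phi_s$ coincide. Thus $E_s^c\subset F^c$. Because $\phi_s/\log s\to\infty$, the polynomial factor $s^d$ is absorbed, giving $\log\P^\lam(E_s^c)\le -c'\phi_s$ for large $s$, uniformly over $\lam\ge\lam_0$.
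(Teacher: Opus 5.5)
\textbf{There is a genuine gap in Step 1.} You correctly flag Step 1 as the main obstacle, but your argument there does not work, and the whole difficulty of the proposition sits in it. The second half of your goodness condition says that every component of $G_\lam\cap\hat Q_z$ of diameter at least $L/10$ lies in the crossing component. That is a local uniqueness statement. It is not monotone in $\lam$, and it is essentially a weak, fixed-scale version of the proposition itself. So Steps 2--3 only reduce the problem to it; they do not prove it.

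None of the tools you invoke supplies this estimate uniformly in $\lam$:
\begin{enumerate}
\item \cref{Yao no large component not in infinite} has a constant $c(\lam,d)$ that is not uniform. It also controls finite clusters of the whole graph $G_\lam$, not clusters inside a box: a component of $G_\lam\cap\hat Q_z$ can belong to $C_\infty$ and still be disjoint from the crossing cluster inside $\hat Q_z$.
\item Monotone coupling handles only the crossing half of goodness.
\item Continuity does not give uniformity. For fixed $L$, the map $\lam\mapsto\P^\lam(z\text{ not good})$ is continuous, and for each fixed $\lam$ it may tend to $0$ as $L\to\infty$. Without monotonicity in $L$ (so no Dini argument), this does not imply $\sup_{\lam\in I}\P^\lam(z\text{ not good})\to 0$, even on a fixed compact interval $I$.
\item Your intermediate range $[\lam_0,\Lambda(L)]$ is not even fixed. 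The high-density criterion needs $\lam\gtrsim\log L$, so $\Lambda(L)$ must grow with $L$.
\end{enumerate}

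\textbf{What is missing, and how the paper gets it.} You need a mechanism that makes the constants depend on $\lam$ only through the condition $\lam\ge\lam_0$. The paper gets this from Penrose's proof by sprinkling from a fixed intermediate intensity $\lam'=(\lam_c+\lam_0)/2$. Write $X_\lam$ as the superposition of $X_{\lam'}$ and an independent $X_{\lam-\lam'}$. In the key ratio-of-probabilities bound (Penrose's Lemma 10.10), the relevant factor $(\lam-\lam')/\lam$ is then at least $1-\lam'/\lam_0>0$ for every $\lam\ge\lam_0$. Lemmas 10.11--10.12, and hence the case $d\ge3$, inherit uniform constants. For $d=2$, $\P^\lam(E_s)$ is bounded below by increasing crossing events, and monotone coupling gives uniformity for free.

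With a uniform fixed-scale uniqueness estimate of this kind imported, your renormalization could plausibly finish the job. Two smaller points also need repair:
\begin{enumerate}
\item Crossing clusters of \emph{diagonally} adjacent enlarged boxes need not meet in their corner overlap. One can hug two faces of its box and avoid that corner. So the chaining in Step 3 should use nearest-neighbour connectivity of good boxes, with $*$-connected bad separating sets, or use larger enlargements.
\item Your reason why metric diameter at least $\phi_s$ forces Euclidean diameter at least $\phi_s$ runs backwards: unit edges give graph distance at least Euclidean distance. This is harmless only because metric diameter here already means Euclidean diameter.
\end{enumerate}
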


\begin{remark}
    Note that 
        the above bound is uniform over $\lambda \ge \lambda_0$
        once we fix a $\lambda_0 > \lambda_c$. 
        This is a stronger statement than that found in \cite{Pen03},
        but follows from the proofs given there.
        Let us briefly sketch which details to check
        in order to get this stronger statement.
        
        The key point is that in Lemma 10.10 in \cite{Pen03},
        if we set $\mu = \lambda$,
        we can insert $\inf_{\lambda \ge \lambda_0}$
        into the quantities being bounded---in 
        between $\inf_A$ or $\inf_{A,B}$ and the ratio 
        of probabilities---and 
        the statement of the lemma will still hold
        (as long as $\lambda_0 > \lambda_c$).
        To see this, in the proof, set $\lambda' := 
        (\lambda_c + \lambda_0)/2$,
        and then the
        final bound written in the 
        proof is at least $\mathrm{(const.)}\frac{\lambda - \lambda'}{\lambda} \ge 1 - \frac{\lambda'}{\lambda_0} > 0$
        for all $\lam \ge \lam_0$.
        This establishes the stronger
        version of Lemma 10.10.
        
        Then, in the proofs of Lemmas
        10.11 and 10.12, the $\gamma$
        extracted from Lemma 10.10
        works for all $\lam \ge \lam_0$
        (and $K'$ only depends on $\lam_0$ as well).
        Thus the bounds given there
        are uniform in $\lam \ge \lam_0$.
        Thus, for the case $d \ge 3$,
        the bounds given in Proposition 10.13
        are uniform in $\lam \ge \lam_0$,
        since these are derived from Lemmas 10.11 and 10.12.
        In the $d=2$ case of
        the proof of Proposition 10.13,
        $\P^\lam(E_s)$ is bounded below 
        by crossing events, which 
        are increasing, so 
        again we get bounds which 
        are uniform in $\lam \ge \lam_0$.
\end{remark}

We also use the following fact that the chemical distance in continuum percolation has good tails. Here and throughout, we write
$x \xleftrightarrow{G} y$ to denote that two points $x$ and $y$ are connected in the graph $G$.
Recalling the definition of $\P^\lam_{0,x}$ from $\eqref{eq:Palm-distribution}$, we have:
 
\begin{lem}[Lemma 3.4, Yao, Chen, and Guo \cite{YCG11}]\label{Yao deviation of chemical distance}
Suppose that $\lam_0 > \lam_c$. 
Then there exist constants $\rho_1=\rho_1(d,\lam_0) > 0$ and $c=c(d,\lam_0) > 0$  such that, for all $x \in \R^d$, all $t \ge \rho_1 \|x\|$, and all $\lam \ge \lam_0$ we have
\begin{equation}\label{tails on distance}
    \P_{0,x}^\lam(0 \xleftrightarrow{G_\lam} x, d_{G_\lam} (0,x) \ge t) \le \exp(-ct).
\end{equation}
\end{lem}

\begin{remark}
    Again, the above statement is stronger than what is stated in \cite{YCG11} in that the
    bound holds simultaneously for all $\lam \ge \lam_0$.
    However, this stronger statement follows
    immediately from the proof in \cite{YCG11}
    if we use
    the stronger version of 
    Proposition 10.13 in \cite{Pen03} 
    which we state here as 
    \cref{largest component in metric diameter}.
\end{remark}

\section{Approximating the chemical distance}

Recall that for $v \in \R^d$, we denote by $\ring{v}$ the (random) point of the infinite cluster of $G_\lam$ which is closest in Euclidean distance to $v$, and recall that we define $D_\lam(x,y) := d_{G_\lam}(\ring{x}, \ring{y})$. Our first step is to approximate $D_\lam$ by a related random variable $\widetilde{D}_\lam$, which is a bounded monotone function of the Poisson process $X_{\lam}$ restricted to a finite volume region of $\R^d$.
Set $M = M(x) = (\log \|x\|)^2$, and define
\[
    \widetilde{D}_\lam(0, x):= \inf\{ d_{G_\lam}(p, q)+M\left(\|p\|+\|x-q\|\right): p,q \in G_\lam \}
    \cup \{M\|x\|\}.
\]
It is straightforward to check that, considered as a function of the point process $X_\lam$, $\widetilde{D}_\lam(0,x)$ is nonincreasing; that is, if $X \subset X'$, 
then $\widetilde{D}(0,x)[X] \ge \widetilde{D}(0,x)[X']$.

We now introduce notation
to keep track 
of the vertices and
edges which realize the infimum
in the definition of $\widetilde{D}_\lam(0,x)$.
If $\widetilde{D}_\lam(0,x) < M\|x\|$, define $\widetilde{0},\widetilde{x} \in G_\lam$ to be the points which satisfy
\begin{equation}
  \label{eq:tilde x}
        \widetilde{D}_\lam(0,x) = d_{G_\lam}(\widetilde{0}, \widetilde{x}) + M(\|\widetilde{0}\| + \|x - \widetilde{x}\|),
\end{equation}
and define $\widetilde{\pi}$ to be the edge path in $G_\lam$
from $\widetilde{0}$ to $\widetilde{x}$ with
$d_{G_\lam}(\widetilde{0},\widetilde{x})$ edges (if
there is more than one candidate, choose the one of
smallest total Euclidean length). If $\widetilde{D}_\lam(0,x) = M \|x\|$,
set $\widetilde{0}=\widetilde{x}=0$ and $\widetilde{\pi}$ to be the empty path from $0$ to $0$.

Note that if any vertex $v$ in $\widetilde{\pi}$ had Euclidean distance greater than $M \|x\|$ from $0$ or $x$, then we would have
\[
    \widetilde{D}_\lam(0,x) = |\widetilde{\pi}| + M(\|\widetilde{0}\| + \|x - \widetilde{x}\|) 
    \ge \|v\| + \|x-v\|> M\|x\|,
\]
which contradicts the definition of $\widetilde{D}_\lam(0,x)$. Hence we see that $\widetilde{D}_\lam(0,x)$ depends only on the Poisson process $X_\lam$ restricted to the finite volume region
$R(\widetilde{D}) := \{ y \in \R^d :
\|y\|, \|x - y\| \le M\|x\|\}.$

Our main goal in this section 
is to show that $\widetilde{D}_\lam(0,x)$
is a good approximation to $D_\lam(0,x)$ (\cref{approx of D_lam} below).
In order to do this, we first need a bound
on the tails of $D_\lam(0,x)$. 
We prove a slightly stronger statement than necessary so that we can reuse it later in the proof of \cref{geodesic is not too long}.

\begin{prop} \label{bound on tails of D}
    There exist constants $C_1,C_2, c_3$ depending only
    on $\lam_0$ and $d$ such that
    for all $t \ge C_1\|x\|$ and all $\lam \ge \lam_0$ we have
    \[
        \P^\lam(D_\lam(0,x) + M(\|\mathring{0}\|+\|x-\mathring{x}\|) \ge t) \le C_2 \exp(-c_3 t^{.99}).
    \]
\end{prop}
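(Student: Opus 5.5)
We bound the two pieces of the left-hand side separately and then combine them with a union bound. Since $D_\lam(0,x)+M(\|\ring 0\|+\|x-\ring x\|)\ge t$ forces either
\[
M\|\ring 0\|\ge t/3,\qquad M\|x-\ring x\|\ge t/3,\qquad\text{or}\qquad D_\lam(0,x)\ge t/3,
\]
it suffices to bound each of these three probabilities by $C\exp(-c\,t^{.99})$, uniformly in $\lam\ge\lam_0$.

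\textbf{The distances to the cluster.} The event $M\|\ring 0\|\ge t/3$ means $B(0,r)\cap C_\infty=\emptyset$ with $r=t/(3M)$. Here $M=(\log\|x\|)^2$ and $t\ge C_1\|x\|$, so $M\le(\log t)^2$ and $r\ge t/(3(\log t)^2)\ge t^{.99}$ for $t$ large. \cref{Yao o close in infinite comp} gives the bound $\exp(-cr^{d-1})\le\exp(-c\,t^{.99})$ since $d\ge2$. By translation invariance the same bound holds for $\|x-\ring x\|$. Small $t$ are absorbed into $C_2$.

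There is one point to check: uniformity of $c$ in $\lam\ge\lam_0$. I would argue this by monotone coupling. The event $B(0,r)\cap C_\infty=\emptyset$ is decreasing in the point process, since adding points can only enlarge $C_\infty$ by uniqueness. Hence its probability at $\lam$ is at most its probability at $\lam_0$.

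\textbf{The chemical distance between $\ring 0$ and $\ring x$.} On the complement of the first two events, both $\ring 0$ and $\ring x$ lie in balls of radius $r'=t/(3M)$ around $0$ and $x$. I would bound $\P(D_\lam(0,x)\ge t/3,\ \|\ring 0\|,\|x-\ring x\|\le r')$ by a union/Palm bound over pairs of points. By \cref{Palm theory} with $k=2$, this is at most
\[
\lam^2\int_{B(0,r')}\int_{B(x,r')}\P^\lam_{y,z}\bigl(y\xleftrightarrow{G_\lam} z,\ d_{G_\lam}(y,z)\ge t/3\bigr)\,dz\,dy .
\]
The connection requirement is valid because $\ring 0$ and $\ring x$ both lie in $C_\infty$. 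We have $\|y-z\|\le\|x\|+2r'$, so if $C_1$ is large then $t/3\ge\rho_1\|y-z\|$. \cref{Yao deviation of chemical distance} (translated) then bounds the integrand by $e^{-ct/3}$.

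The prefactor is $\lam^2 r'^{2d}$, which is polynomial in $t$. The real obstacle is the factor $\lam^2$, which is unbounded as $\lam\to\infty$. I would handle it by discretizing instead: cover the two balls by cubes of side $1/2$ and union bound over pairs of cubes containing $\ring0,\ring x$. Equivalently, I would replace the point $\ring 0$ by the nearest point of $C_\infty$ to a lattice point, bounding by a union over $O(r'^{2d})$ lattice pairs. Alternatively, apply Palm only to the single point closest to each cube centre, using a version of \cref{Yao deviation of chemical distance} for the points $\ring u,\ring v$. If needed, the dependence of the integrand on $\lam$ at large $\lam$ can be avoided by noting the chemical distance between cluster points in adjacent cubes is at most a constant times the Euclidean distance with high probability.

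Either way, the polynomial prefactor in $t$ is absorbed by $e^{-ct}$ into $C_2e^{-c_3t^{.99}}$. The exponent $.99$ simply leaves room for the $\log^2$ loss coming from $M$.
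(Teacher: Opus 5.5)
Your three-way decomposition and your treatment of the two cluster-distance terms are correct and agree with the paper, including the monotone coupling for uniformity in $\lam$. The gap is in the chemical-distance term, at exactly the obstacle you identify: none of the remedies you list actually removes the factor $\lam^2$.

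\begin{itemize}
\item A union bound over $O(r'^{2d})$ pairs of cubes needs, for each pair, a tail bound on $d_{G_\lam}(\ring u,\ring v)$ that is uniform in $\lam\ge\lam_0$. But \cref{Yao deviation of chemical distance} concerns Palm points added to the process, and ``a version of it for $\ring u,\ring v$'' is essentially the statement you are trying to prove.
\item If you instead apply \cref{Palm theory} to the selected point of each cube, you get $\lam^2\int\int\P^\lam_{y,z}(y,z\text{ selected},\ y\xlra z,\ d_{G_\lam}(y,z)\ge t/3)\,dy\,dz$. Bounding the integrand by the distance event alone brings back $\lam^2$.
\item Your final remark is misdirected. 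The integrand is already uniform in $\lam$ by \cref{Yao deviation of chemical distance}; the only trouble is the intensity measure $\lam^2\,dy\,dz$.
\end{itemize}

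The missing idea is to keep the selection event and use it quantitatively. Its probability must decay on the length scale $\lam^{-1/d}$ in the position of the Palm point, so that integrating against $\lam\,dy$ gives $O(1)$. The paper does this as follows.
\begin{itemize}
\item It splits the integrand with Cauchy--Schwarz and bounds $\P^\lam_{p,q}(\ring 0=p)\le\P^\lam(C_\infty\cap B(0,\|p\|)=\emptyset)$.
\item It uses the scaling $X_\lam\overset{d}{=}(\lam_0/\lam)^{1/d}X_{\lam_0}$. Shrinking preserves edges, so under this coupling $C_\infty(X_\lam)\supseteq(\lam_0/\lam)^{1/d}C_\infty(X_{\lam_0})$. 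This gives decay $\exp(-c[(\lam/\lam_0)^{1/d}\|p\|]^{d-1})$.
\item The substitution $R=\lam^{1/d}p$ then cancels $\lam^2$.
\end{itemize}
Your monotone coupling cannot replace this step, since it only gives decay on the fixed $\lam_0$ scale.

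Alternatively, your discretization can be completed elementarily.
\begin{itemize}
\item Use cubes of side at most $1/\sqrt d$; side $1/2$ is too large when $d\ge 5$.
\item Let $y^*$ be the Poisson point nearest the centre $u$ of the cube containing $\ring 0$, and define $z^*$ likewise for $\ring x$. Then $\|y^*-\ring 0\|\le 1$, so $y^*\in C_\infty$, and $d_{G_\lam}(\ring 0,\ring x)\le d_{G_\lam}(y^*,z^*)+2$.
\item In the Palm integral, apply Cauchy--Schwarz with the exact void probability $\P(X_\lam\cap B(u,r)=\emptyset)=e^{-\lam\omega_d r^d}$, where $\omega_d$ is the volume of the unit ball. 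Its fourth root integrates against $\lam\,dy$ to a constant independent of $\lam$.
\end{itemize}
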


\begin{proof}
    Let $\rho_1(\lam_0) < \infty$ be as in \cref{Yao deviation of chemical distance}.
    Then whenever $t \ge \|x\|$ we have
    \begin{align*}
        &\P^\lam\Big(D_\lam(0,x) 
        + M\|\mathring{0}\| + M\|x - \mathring{x}\| \ge (3\rho_1 + 2)t \Big) \\
        &\le \P^\lam\left( d_{G_\lam}(\mathring{0},\mathring{x}) \ge 3 \rho_1 t, \|\mathring{0}\|, \|x - \mathring{x}\| \le \frac{t}{M}\right)
        +\P^\lam\left(\|\mathring{0}\| > \frac{t}{M}\right)
        +\P^\lam\left(\|x - \mathring{x}\| > \frac{t}{M}\right) \\
        &\le \E^\lam \left[ 
                \sum_{\substack{p \in B\left(0,\frac{t}{M}\right) \cap X_\lam \\
                                q \in B\left(x, \frac{t}{M}\right) \cap X_\lam}}
                \1\left\{\mathring{0}=p,\mathring{x}=q, 
                d_{G_\lam}(p,q) \ge 3\rho_1 t\right\}
                \right]
                + 2 \P^{\lam_0}\left(C_{\infty} \cap B\left(0, \frac{t}{M}\right)
                = \emptyset
                \right) \\
        &\le \lam^2 \int_{\substack{p \in B(0,t/M) \\ q \in B(x,t/M)}}
                \P^\lam_{p,q}\big(\mathring{0}=p,\mathring{x}=q, 
                d_{G_\lam}(p,q) \ge 3\rho_1 t\big)dp dq + 2\exp(-c_2(t/M)^{d-1}).
    \end{align*}
    (Recall the definition of $\P^\lam_{p,q}$ from \eqref{eq:Palm-distribution}.)
    In the last line we used
    \cref{Palm theory} and
    \cref{Yao o close in infinite comp}.
    Since the second summand is $O(-c_2[t/(\log t)^2]^{d-1})$, we now focus on the first term.

    Using Cauchy-Schwarz, \cref{Yao deviation of chemical distance},
    and \cref{Yao o close in infinite comp}
    then gives
    \begin{align*}
        &\lam^2 \int_{\substack{p \in B(0,t/M) \\ q \in B(x,t/M)}}
            \P^\lam_{p,q}\big(\mathring{0}=p,\mathring{x}=q, 
            d_{G_\lam}(p,q) \ge 3\rho_1 t\big)dp dq \\
        &\le
        \lam^2 \int_{\substack{p \in B(0,t/M) \\ q \in B(x,t/M)}}
            \sqrt{\P^\lam_{p,q}\big(\mathring{0}=p,\mathring{x}=q\big)
            \P^\lam_{p,q}\big(
            d_{G_\lam}(p,q) \ge 3\rho_1 t\big)}dp dq \\
        &\le \lam^2 \exp\left(-\frac{3 \rho_1 c}{2} t\right)
        \int_{\substack{p \in B(0,t/M) \\ q \in B(x,t/M)}}
        \P^\lam( C_\infty \cap B(0,\|p\|) = \emptyset)^{1/4}
        \P^\lam( C_\infty \cap B(x,\|x-q\|) = \emptyset)^{1/4} dp dq \\
        &\le
        \lam^2 \exp\left(-\frac{3 \rho_1 c}{2} t\right)
        \left(
            \int_{p \in B(0,t/M)} 
            \P^{\lam_0}\left(
            \left( \frac{\lam_0}{\lam} \right)^{1/d} C_\infty \cap
            B(0, \|p\|) = \emptyset 
        \right)^{1/4}
        dp
        \right)^2 \\
        &\le
        \lam^2 \exp\left(-\frac{3 \rho_1 c}{2} t\right)
        \left(
            \int_{p \in \R^d} 
            \P^{\lam_0}\left(
             C_\infty \cap
            B\left(0, \left(\frac{\lam}{\lam_0} \right)^{1/d} \|p\|
            \right) = \emptyset 
        \right)^{1/4}
        dp
        \right)^2 \\
        &\le
        \lam^2 \exp\left(-\frac{3 \rho_1 c}{2} t\right)
        \left(
            \int_{p \in \R^d} 
            \exp\left(-\frac{c_2}{4} \left[\frac{\lam^{1/d} \|p\|}{\lam_0^{1/d}}\right]^{d-1}\right) dp
        \right)^2 \\
        &=
        \exp\left(-\frac{3 \rho_1 c}{2} t\right)
        \left(
            \int_{R \in \R^d} 
            \exp\left(-\frac{c_2}{4\lam_0^{\frac{d-1}{d}}} \|R\|^{d-1}\right) dR
        \right)^2.
    \end{align*}
    Note that \cref{Yao deviation of chemical distance} applies
    to all $p,q$ under consideration since for such
    $p,q$ we have $\|p-q\|\le 3t$.
    In both the second and third lines Cauchy-Schwarz is used.
    In the fourth line we use that
    $X_\lam$ is equal in distribution to $\left(\frac{\lam_0}{\lam} \right)^{1/d} X_{\lam_0}$, and so in particular
    there is a coupling of $X_{\lam}$ and $X_{\lam_0}$ such
    that $C_\infty(X_{\lam})$ almost surely contains 
    $\left(\frac{\lam_0}{\lam} \right)^{1/d} C_\infty(X_{\lam_0})$.
    The cancellation of $\lam^2$ in the last line comes from
    the change of variables $R = \lam^{1/d} p$.

    Thus, if we take any $\infty > C_1 > 3 \rho_1$
    and any $0<c_3 < \frac{3 \rho_1 c}{2(3 \rho_1 + 2)}$
    we can find a $C_2$ sufficiently large that
    \[
    \P^\lam(D_\lam(0,x) + M(\|\mathring{0}\|+\|x-\mathring{x}\|) \ge t)
    \le C_2 \exp(-c_3 t^{.99}),
    \]
    as desired.
\end{proof}

We now come to the main result of this section.

\begin{prop} \label{approx of D_lam}
    For every $\lam > \lam_c$, 
    \[
    \E|D_\lam(0,x) - \widetilde{D}_\lam(0,x)| = o(\|x\|),
    \]
    (where the rate of convergence implicit in the little-o notation may depend on $\lambda$).
\end{prop}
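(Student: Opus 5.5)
We split the difference into a one-sided upper bound and a one-sided lower bound for $D_\lam(0,x) - \widetilde{D}_\lam(0,x)$ and treat each separately.

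\textbf{Lower bound on $\widetilde D$.} For any $p,q$ in the same component, $d_{G_\lam}(p,q)+M(\|p\|+\|x-q\|)\ge \|p-q\| + M\|p\|+M\|x-q\|\ge \|x\|$. So $\widetilde D_\lam(0,x)\ge \|x\|$ up to constants in general, and it is always at most $M\|x\|$.

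\textbf{Upper comparison.} Take $p=\ring{0}$ and $q=\ring{x}$ in the infimum. This gives
\[
\widetilde D_\lam(0,x)\le D_\lam(0,x)+M(\|\ring 0\|+\|x-\ring x\|).
\]
Hence $\widetilde D - D \le M(\|\ring 0\|+\|x-\ring x\|)$, whose expectation is $O(M)=O((\log\|x\|)^2)=o(\|x\|)$ by \cref{Yao o close in infinite comp}.

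\textbf{Lower comparison.} We need $\E[(D-\widetilde D)^+]=o(\|x\|)$. The danger is that the minimizing pair $\widetilde 0,\widetilde x$ might not lie in $C_\infty$, or might sit in a different component than $\ring0,\ring x$.

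Define a good event $A$ with three parts:
\begin{enumerate}
\item $\|\ring0\|,\|x-\ring x\|\le \log^2\|x\|$;
\item every finite component of $G_\lam$ meeting $R(\widetilde D)$ has fewer than $(\log\|x\|)^{2d/(d-1)}$ points, hence Euclidean diameter at most that;
\item for every pair of points of $C_\infty$ within distance $\sqrt{M}$-ish of each other (say, any $p\in B(0,\log^3\|x\|)$ with $\ring0$), the chemical distance is at most $\rho_1$ times a polylog.
\end{enumerate}

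\textbf{Bounding $P(A^c)$.} This uses \cref{Yao o close in infinite comp}, \cref{Yao no large component not in infinite} with $|A|=\mathrm{poly}(\|x\|)$, and \cref{Yao deviation of chemical distance} together with Palm theory (\cref{Palm theory}), exactly as in the proof of \cref{bound on tails of D}. Each part fails with probability $O(\|x\|^{-k})$ for every $k$.

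\textbf{On $A$.} Since $\widetilde D\le D+O(M\log^2\|x\|)\le$ (with high probability) $C\|x\|$, the minimizers satisfy $\|\widetilde0\|,\|x-\widetilde x\|\le C\|x\|/M$. Moreover $d(\widetilde0,\widetilde x)<\infty$, so $\widetilde0,\widetilde x$ lie in one component, which has diameter at least roughly $\|x\|/2$. By part 2 of $A$ it is therefore $C_\infty$.

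We then use the triangle inequality:
\[
D\le d(\ring0,\widetilde0)+d(\widetilde0,\widetilde x)+d(\widetilde x,\ring x).
\]
We must bound $d(\ring0,\widetilde0)$ by $o(\|x\|)+M\|\widetilde0\|$. Since $\|\widetilde0-\ring0\|\le \|\widetilde 0\|+\log^2\|x\|$, \cref{Yao deviation of chemical distance} (after Palm summation over pairs in a ball of radius $C\|x\|/M$) gives
\[
d(\ring0,\widetilde0)\le \rho_1(\|\widetilde0\|+\log^2\|x\|)+\log^2\|x\|
\]
off an event of tiny probability. For $M>\rho_1$ this is at most $M\|\widetilde0\|+O(\log^2\|x\|)$. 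Thus $D-\widetilde D\le O(\log^2\|x\|)$ on $A$.

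\textbf{Off $A$.} We use Cauchy–Schwarz:
\[
\E[(D-\widetilde D)^+\1_{A^c}]\le \sqrt{\E D^2}\sqrt{P(A^c)}.
\]
Here $\E D^2=O(\|x\|^2)$ by \cref{bound on tails of D}, so this term is $o(\|x\|)$.

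\textbf{Main obstacle.} The main obstacle is the Palm-summation step controlling $d(\ring0,\widetilde0)$ uniformly over all candidate pairs in a large ball, since $\widetilde0$ is random. We handle it by a union bound over $p\in X_\lam\cap B(0,C\|x\|/M)$ and $q\in$ the same ball with $p\leftrightarrow q$. The Palm formula turns this into an integral of polynomial volume times $\exp(-c\,\mathrm{polylog})$ when $\|p-q\|$ is small. When $\|p-q\|$ is large, we use the bound $t\ge\rho_1\|p-q\|$ directly.
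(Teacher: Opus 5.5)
Your argument is correct. It uses the same ingredients as the paper:
\begin{itemize}
\item a good event built from \cref{Yao o close in infinite comp}, \cref{Yao no large component not in infinite}, and \cref{Yao deviation of chemical distance} with Palm summation;
\item placing $\widetilde{0},\widetilde{x}$ in $C_\infty$ because they lie in a component of Euclidean diameter at least $\|x\|/2$;
\item the triangle inequality through $\ring{0},\ring{x}$;
\item Cauchy--Schwarz off the good event.
\end{itemize}
You organize it more economically, in three ways.

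\textbf{The positive/negative split.} The paper proves a two-sided bound $|D_\lam-\widetilde D_\lam|=O(M^2)$ on $A$. It then has to control $\E[\1_{A^c}\widetilde D_\lam]$ using only $\widetilde D_\lam\le M\|x\|$, which forces it to show $\P(A^c)=o(M^{-2})$. You note that $(\widetilde D_\lam-D_\lam)^+\le M(\|\ring{0}\|+\|x-\ring{x}\|)$ holds everywhere. So off the good event you only face $(D_\lam-\widetilde D_\lam)^+\le D_\lam$, and $\P(A^c)=o(1)$ suffices.

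\textbf{Absorbing the connection cost.} On the good event you absorb $\rho_1\|\widetilde 0\|$ directly into the penalty $M\|\widetilde 0\|$ once $M\ge\rho_1$. This gives $D_\lam-\widetilde D_\lam=O(M)$ without the paper's self-improving estimate $\|\widetilde 0\|+\|x-\widetilde x\|\le 8\rho_1+4M$.

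\textbf{Getting $\widetilde D_\lam\le C\|x\|$.} You obtain this from the tail bound of \cref{bound on tails of D}, rather than from $A_1$ applied to $(\ring{0},\ring{x})$. You also use a polylogarithmic cluster-size threshold where the paper uses $\|x\|/2$.

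\textbf{One piece of bookkeeping.} Item 3 of your event $A$, as written (pairs near $\ring{0}$ inside $B(0,\log^3\|x\|)$), does not cover what you actually use. The point $\widetilde 0$ may be at distance of order $\|x\|/M$ from the origin. The event you need is the one in your final paragraph, combined with a tail bound on $D_\lam$:
\begin{itemize}
\item every connected pair $p,q$ in $B(0,C\|x\|/M)$ or in $B(x,C\|x\|/M)$ satisfies $d_{G_\lam}(p,q)\le\rho_1\|p-q\|+M$;
\item $\{D_\lam(0,x)\le C_1\|x\|\}$ holds, which also disposes of the degenerate case $\widetilde D_\lam=M\|x\|$.
\end{itemize}
Both fail with probability $o(1)$. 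They should be placed into $A$ explicitly before the Cauchy--Schwarz step.
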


We begin by constructing an event $A$ that happens with high probability,
and then we show an almost sure bound on $|\widetilde{D}_\lam(0,x) - D_\lam(0,x)|$ on the event $A$ via persistent use of the triangle inequality.
Recall that we defined
$R(\widetilde{D}) := \{ y \in \R^d :
\|y\|, \|x - y\| \le M\|x\|\}.$

\begin{proof}[Proof of \cref{approx of D_lam}]

For $\lam>\lam_c$, let $\rho_1$ be as in  \cref{Yao deviation of chemical distance}.

We define the following events:
\begin{align*}
        A_1 :=& 
        \left\{
        \forall
        p,q \in G_\lam \cap R(\widetilde{D}), \mbox{ either } p \stackrel{G_\lam}{\centernot\longleftrightarrow} q 
        \mbox{ or }  
        d_{G_{\lam}}(p,q) \le \rho_1 \max(\|p - q\|,M)
        \right\} \\
        A_2 :=& \{\forall v \in  G_\lam \cap R(\widetilde{D}), \mbox{ either } v \in C_\infty \mbox{ or }|C(v)| \le \|x\|/2\} \\
        A_3 :=& \{\|\ring{0}\|,\|x-\ring{x}\|\le M\},
\end{align*}
and let $A = A_1 \cap A_2 \cap A_3$. On the event $A$, all pairs which are connected in $G_\lam$ have chemical distance not too large; there are no large components other than the infinite one; and both $0$ and $x$ are not too far from the infinite component.

\begin{claim}\label{A^c is small}
    $\P(A^c) = o(M^{-2})$ (where the constants 
    implicit in the little o notation may depend on $\lambda$).
\end{claim}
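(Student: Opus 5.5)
We bound $\P(A^c) \le \P(A_1^c)+\P(A_2^c)+\P(A_3^c)$ and show that each term decays faster than any power of $M = (\log\|x\|)^2$. The key observation is that $R(\widetilde{D})$ has volume only polynomial in $\|x\|$, namely $O((M\|x\|)^d)$, while every failure probability decays like $\exp(-c\,(\text{something at least } M^{c'}\text{ or }\|x\|^{c'}))$. A union bound over polynomially many pieces therefore still yields $o(M^{-2})$. Since $\lam$ is fixed, all constants may depend on $\lam$.

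\emph{The event $A_3$.} By \cref{Yao o close in infinite comp} and translation invariance,
\[
\P(\|\ring{0}\|>M)=\P(B(0,M)\cap C_\infty=\emptyset)\le \exp(-cM^{d-1}),
\]
and likewise for $\ring{x}$. Since $M\to\infty$ and $d\ge 2$, this is $\exp(-c(\log\|x\|)^{2(d-1)}) = o(M^{-2})$.

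\emph{The event $A_2$.} Cover $R(\widetilde{D})$, which lies in the cube $[-s/2,s/2]^d$ with $s = 4M\|x\|$, by that cube and apply \cref{Yao no large component not in infinite} with $A = R(\widetilde{D})$ and $k=\|x\|/2$. This bounds the probability of a finite component of size at least $\|x\|/2$ that is contained in the cube and meets $R(\widetilde{D})$ by $\lam(M\|x\|)^d\exp(-c\|x\|^{(d-1)/d})$, which is superpolynomially small. One subtlety remains: a finite cluster of $G_\lam$ might not lie inside the cube. Such a cluster meets $R(\widetilde{D})$ and exits the cube, so it has Euclidean diameter at least $M\|x\|$ and hence at least $M\|x\|$ points, so it is large anyway. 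To handle this, apply the lemma on a cube of side $2s$. The finite components of $G_\lam\cap[-s,s]^d$ are handled by the lemma. A finite cluster of $G_\lam$ that exits $[-s,s]^d$ contains a sub-component of $G_\lam\cap[-s,s]^d$ of diameter at least $M\|x\|$. This sub-component is either finite, and so handled by the lemma, or infinite, which is impossible for a finite cluster. Hence $\P(A_2^c)$ is superpolynomially small.

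\emph{The event $A_1$.} This is the main obstacle, because $A_1$ quantifies over uncountably many possible point pairs. We convert it into an expectation via \cref{Palm theory} with $k=2$:
\[
\P(A_1^c)\le \lam^2\int_{R(\widetilde{D})^2}\P^\lam_{p,q}\bigl(p\xlra q,\ d_{G_\lam}(p,q)>\rho_1\max(\|p-q\|,M)\bigr)\,dp\,dq .
\]
The integrand depends only on $q-p$ by translation invariance, and \cref{Yao deviation of chemical distance} applies with $t=\rho_1\max(\|p-q\|,M)\ge\rho_1\|q-p\|$, bounding it by $\exp(-c\rho_1 M)$. The domain has volume $O((M\|x\|)^{2d})$, so
\[
\P(A_1^c)\le C(M\|x\|)^{2d}\exp(-c\rho_1(\log\|x\|)^2),
\]
which is superpolynomially small in $\|x\|$. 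This is exactly why the threshold $\max(\cdot,M)$ with $M=(\log\|x\|)^2$ was chosen: $\exp(-cM)$ beats any polynomial in $\|x\|$. One technical point needs checking: \cref{Palm theory} counts ordered pairs of distinct points. The case $p=q$ is trivial, and the indicator is bounded, so Palm theory applies directly.

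Summing the three bounds gives $\P(A^c)=o(M^{-2})$, indeed with room to spare.
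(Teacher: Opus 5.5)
Your proposal is correct and is the same argument as the paper's. It takes a union bound over $A_1^c, A_2^c, A_3^c$, controlled respectively by \cref{Palm theory} with \cref{Yao deviation of chemical distance}, by \cref{Yao no large component not in infinite} with $A = R(\widetilde{D})$ and $k = \|x\|/2$, and by \cref{Yao o close in infinite comp}. This gives the same bounds $O((M\|x\|)^{2d})e^{-cM}$, $O((M\|x\|)^{d})e^{-c\|x\|^{(d-1)/d}}$ and $e^{-cM^{d-1}}$, all superpolynomially small in $\|x\|$.

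Your treatment of clusters leaving the box is a detail the paper omits. It is fine but roundabout, since every component of $G_\lam \cap [-s,s]^d$ is a finite point set. It can be shortcut by noting that the bound in \cref{Yao no large component not in infinite} does not depend on $s$.
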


\noindent \emph{Proof of \cref{A^c is small}}: 
By \cref{Yao deviation of chemical distance} 
(and \cref{Palm theory}), the probability that some pair of points in $R(\widetilde{D})$ have large chemical distance is $ O(M^{2d}\|x\|^{2d})\exp(-cM)$; by \cref{Yao no large component not in infinite}, the probability that there is a large component that is not the infinite component is $O((M\|x\|)^d\exp(-c\|x\|^{\frac{d-1}{d}})$; and by \cref{Yao o close in infinite comp}, the probability that either $0$ or $x$ is more than distance $M$ from the infinite component is $O\left(\exp(-cM^{d-1})\right)$. 
\hfill $\blacksquare$

Next, we want to show an almost-sure bound
\begin{equation} \label{target bound on A}
\1_A|\widetilde{D}_\lam(0,x)-D_\lam(0,x)| = o(\|x\|).
\end{equation}

Assume that the event $A$ holds.

Let us first take for granted that
$\widetilde{0}, \mathring{0}, \widetilde{x}, \mathring{x}$ are all connected in $G_\lam$
(we establish this in \cref{widetilde o infinite component} below).
Then the chemical distance condition guaranteed by $A_1$ shows us that:
\begin{equation} \label{other geometric bound}
\begin{aligned}
\dg(\ring{0}, \ring{x})-\dg(\widetilde{0}, \widetilde{x}) 
&\leq \dg(\ring{0}, \widetilde{0}) + \dg(\ring{x}, \widetilde{x}) \\
&\leq \rho_1\left[\max \left(\|\ring{0}-\widetilde{0}\|, M\right)+\max \left(\|\ring{x}-\widetilde{x}\|, M\right)\right] \\
& \leq \rho_1\left[\|\ring{0}\|+\|\widetilde{0}\|+\|x-\ring{x}\|+\|x-\widetilde{x}\| +2 M \right] \\
& \leq \rho_1\left[\|\widetilde{0}\|+\|x-\widetilde{x}\| +4 M \right],
\end{aligned}
\end{equation}
where the last line comes from the upper bounds on $\|\ring{0}\|,\|x-\ring{x}\|$ given by $A_3$.

To bound $\|\widetilde{0}\| + \|x-\widetilde{x}\|$, first note that
by the definition of $\widetilde{D}_\lam$ and since $\ring{0}$ and $\ring{x}$ are connected in $C_\infty$,
\[
 \widetilde{D}_\lam(0,x) = d_{G_\lam}(\widetilde{0},\widetilde{x}) + M(\|\widetilde{0}\| + \|x-\widetilde{x}\|) \le  d_{G_\lam}(\ring{0},\ring{x}) + M(\|\ring{0}\| + \|x-\ring{x}\|).
\]

Rearranging then gives
\begin{equation} \label{geometric bound}
    \begin{aligned}
    \|\widetilde{0}\| + \|x-\widetilde{x}\| \le& \frac{1}{M}[d_{G_\lam}(\ring{0},\ring{x}) - d_{G_\lam}(\widetilde{0},\widetilde{x})] + \|\ring{0}\| + \|x-\ring{x}\| \\
    \le& \frac{1}{M}[d_{G_\lam}(\ring{0},\ring{x}) - d_{G_\lam}(\widetilde{0},\widetilde{x})] + 2M.
    \end{aligned}
\end{equation}
where 
the last bound again comes from the event $A_3$.

Combining \eqref{other geometric bound} with \eqref{geometric bound} gives
\begin{align*}
\|\widetilde{0}\|+\|x-\widetilde{x}\| & \leq \frac{1}{M}\left[\dg(\ring{0}, \ring{x})-\dg(\widetilde{0}, \widetilde{x})\right]+2 M \\
& \leq \frac{\rho_1}{M}\left[\|\widetilde{0}\|+\|x-\widetilde{x}\|\right]+4 \rho_1+2 M,
\end{align*}
and rearranging gives
\begin{align*}
\|\widetilde{0}\|+\|x-\widetilde{x}\| & \leq\left(1-\frac{\rho_1}{M}\right)^{-1}(4 \rho_1+2 M) \\
& \leq 8 \rho_1+4 M.
\end{align*}

This last inequality holds whenever $\|x\|$ is sufficiently large so that $\rho_1/M < 1/2$. Plugging this last bound into
\eqref{other geometric bound} then gives
\[
\dg(\ring{0},\ring{x})-\dg(\widetilde{0}, \widetilde{x}) \leq 8 \rho_1^2+8 \rho_1 M .
\]

Note also that by \eqref{geometric bound} we have
\[
    \dg(\ring{0},\ring{x})-\dg(\widetilde{0}, \widetilde{x})
    \ge
    M( \|\widetilde{0}\| - \|\mathring{0}\| 
    + \|x - \widetilde{x}\| - \|x - \mathring{x}\|) \ge 0,
\]
where to get the second inequality 
we used the definition of
$\mathring{0},\mathring{x}$
together with the fact that
(by our connectivity assumption)
$\widetilde{0},\widetilde{x} \in C_\infty$.

Thus, putting everything together, we see that on the event $A$,
\begin{align*}
\left|D_\lam(0, x)-\widetilde{D}_\lam(0, x)\right| & = \left| \dg(\ring{0}, \ring{x})-\dg(\widetilde{0}, \widetilde{x})-M\left[\|\widetilde{0}\|+\|x-\widetilde{x}\|\right] \right| \\
& \leq \dg(\ring{0}, \ring{x})-\dg(\widetilde{0}, \widetilde{x})+M\left[\|\widetilde{0}\|+\|x-\widetilde{x}\|\right] \\
& \leq\left(8 \rho_1^2+ 8 \rho_1 M\right)+\left(8 \rho_1 M+4 M^2\right)=O\left(M^2\right)=o\left(\|x\|\right),
\end{align*}
and so we will have shown \eqref{target bound on A}
once we justify our initial assumption
that $\widetilde{0},\widetilde{x},\mathring{0},\mathring{x}$ are all connected.

\begin{claim}\label{widetilde o infinite component}
    On the event $A$,
    the points $\widetilde{0}$ and $\widetilde{x}$ lie in $C_\infty$. In particular, $\ring{0} \xlra \widetilde{0}$ and $\ring{x} \xlra \widetilde{x}$.
\end{claim}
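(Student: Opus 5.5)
Since $\widetilde{0}$ and $\widetilde{x}$ are joined by the path $\widetilde{\pi}$, it suffices to show that the component $C(\widetilde{0})$ is $C_\infty$. The last assertion then follows because $\ring{0},\ring{x}\in C_\infty$ by definition. I would argue by contradiction: assume $\widetilde{0}\notin C_\infty$. By $A_2$, since $\widetilde{0}\in R(\widetilde{D})$, we get $|C(\widetilde{0})|\le \|x\|/2$. First handle the degenerate case $\widetilde{D}_\lam(0,x)=M\|x\|$, where $\widetilde{0}=\widetilde{x}=0$ is only a convention. In that case I would compare with the competitor $p=\ring{0}$, $q=\ring{x}$. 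By $A_3$ both lie in $R(\widetilde{D})$, and they are connected because both are in $C_\infty$. Then $A_1$ gives
\[
d_{G_\lam}(\ring{0},\ring{x})+M(\|\ring{0}\|+\|x-\ring{x}\|)\le \rho_1\max(\|x\|+2M,M)+2M^2<M\|x\|
\]
for large $\|x\|$, since $M\to\infty$. This contradicts the definition of the infimum, so the degenerate case cannot occur on $A$ for large $\|x\|$.

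In the nondegenerate case, $\widetilde{0}$ and $\widetilde{x}$ lie in the same finite component $C$. The path $\widetilde{\pi}$ has at most $|C|-1<\|x\|/2$ edges, and each edge has Euclidean length at most $1$. Hence $\|\widetilde{x}-\widetilde{0}\|\le\|x\|/2$, and so $\|\widetilde{0}\|+\|x-\widetilde{x}\|\ge \|x\|-\|\widetilde{x}-\widetilde{0}\|\ge \|x\|/2$. This gives $\widetilde{D}_\lam(0,x)\ge M\|x\|/2$. The competitor bound from the previous paragraph, however, gives $\widetilde{D}_\lam(0,x)\le \rho_1(\|x\|+2M)+2M^2$, which is $o(M\|x\|)$. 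For $\|x\|$ large this is a contradiction, so $\widetilde{0}\in C_\infty$, and hence $\widetilde{x}\in C_\infty$ as well.

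The main care point is the competitor bound. It relies on $\ring{0},\ring{x}\in R(\widetilde{D})$, which follows from $A_3$ as soon as $M\ge1$ (for instance $\|\ring{0}\|\le M\le M\|x\|$ and $\|x-\ring{0}\|\le \|x\|+M\le M\|x\|$). It also uses $\|\ring{0}-\ring{x}\|\le\|x\|+2M$, so that $A_1$ applies with the right bound. Everything else is the triangle inequality together with the unit edge length, so I expect no real obstacle beyond requiring $\|x\|$ sufficiently large. This is consistent with the surrounding argument, which already assumes $\rho_1/M<1/2$.
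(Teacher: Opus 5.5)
Your proposal is correct and is essentially the paper's argument: the paper uses the same ingredients ($A_2$ to reduce to a component-size bound, unit edge lengths plus the triangle inequality, and the competitor $(\ring{0},\ring{x})$ controlled by $A_1$ and $A_3$) to show directly that $d_{G_\lam}(\widetilde{0},\widetilde{x})\ge \|x\|/2$, whereas you run the same comparison in contrapositive form against $\widetilde{D}_\lam(0,x)\ge M\|x\|/2$. Your explicit treatment of the degenerate case $\widetilde{D}_\lam(0,x)=M\|x\|$, and your check that $\ring{0},\ring{x}\in R(\widetilde{D})$ so that $A_1$ applies, fill in details the paper leaves implicit.
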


\noindent \emph{Proof of \cref{widetilde o infinite component}:}
    On the event $A_2$, there are no large finite components intersecting $G_\lam$. Thus it suffices to show that $\widetilde{0}$ and $\widetilde{x}$ lie in a component of size at least $\|x\|/2$.

By the triangle inequality,
\[ \dg(\widetilde{0},\widetilde{x}) 
 \ge \|\widetilde{0} - \widetilde{x}\|
 \ge \|x\|-[\|\widetilde{0}\| + \|x-\widetilde{x}\|].
\]

Using \eqref{geometric bound}\footnote{Note that \eqref{geometric bound} was derived without assuming \cref{widetilde o infinite component}.} and the trivial bound $\dg(\widetilde{0},\widetilde{x}) \ge 0$,
\begin{align*}
\|\widetilde{0}\|+\|x-\widetilde{x}\| & \leq \frac{1}{M} \dg(\ring{0}, \ring{x})+2 M \\
& \leq \frac{\rho_1}{M} \|\ring{0}-\ring{x}\| + \rho_1 +2 M \\
& \leq \frac{\rho_1}{M}\left[\|x\|+\|\ring{0}\|+\|x-\ring{x}\|\right]+ \rho_1 + 2 M \\
& \leq \frac{\rho_1}{M} \|x\|+3 \rho_1+2 M .
\end{align*}
The inequality on the second line follows from the chemical distance condition given by $A_1$, 
and the fourth line comes from $A_3$.

Thus, we have
\begin{align*}
    \dg(\widetilde{0},\widetilde{x}) &\ge \|x\| - \left[\frac{\rho_1}{M}\|x\| + 3 \rho_1 + 2M \right] \\
    &=\left(1 - \frac{\rho_1}{M}\right)\|x\| - 3\rho_1 - 2M \\
    &\ge \frac{1}{2}\|x\|,
\end{align*}
where the last line holds as long as $\|x\|$ is sufficiently large. Thus $\widetilde{0}$ and $\widetilde{x}$ lie in a component of size at least $\|x\|/2$; since $A_2$ holds, they lie in the infinite component and $\widetilde{0},\ring{0},\widetilde{x}$, and $\ring{x}$ are all connected in $G_\lam$.

\hfill $\blacksquare$

Finally, we show that $A^c$ can be ignored:

\begin{claim} \label{ignore A^c}
\[
\mathbb{E}\left[\1_{A^c}\left|D_\lam(0, x)-\widetilde{D}_\lam(0, x)\right|\right] = o(\|x\|) .
\]
\end{claim}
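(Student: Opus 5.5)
We bound the difference crudely by the triangle inequality, $|D_\lam(0,x)-\widetilde{D}_\lam(0,x)| \le D_\lam(0,x) + \widetilde{D}_\lam(0,x)$, and treat the two terms separately.

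For the $\widetilde{D}$ term, note that by definition $0 \le \widetilde{D}_\lam(0,x) \le M\|x\|$ deterministically. By \cref{A^c is small},
\[
\E[\1_{A^c}\widetilde{D}_\lam(0,x)] \le M\|x\|\,\P(A^c) = M\|x\|\cdot o(M^{-2}) = o(\|x\|/M) = o(\|x\|).
\]

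For the $D_\lam$ term, we use Cauchy--Schwarz:
\[
\E[\1_{A^c} D_\lam(0,x)] \le \P(A^c)^{1/2}\, \E[D_\lam(0,x)^2]^{1/2}.
\]
To control the second moment we use \cref{bound on tails of D}. Since $D_\lam(0,x) \le D_\lam(0,x) + M(\|\ring{0}\|+\|x-\ring{x}\|)$, we have $\P(D_\lam(0,x)\ge t) \le C_2\exp(-c_3t^{.99})$ for all $t\ge C_1\|x\|$. Integrating the tail,
\[
\E[D_\lam(0,x)^2] = \int_0^\infty 2t\,\P(D_\lam(0,x)\ge t)\,dt \le (C_1\|x\|)^2 + \int_{C_1\|x\|}^\infty 2t\,C_2 e^{-c_3t^{.99}}\,dt = O(\|x\|^2).
\]
Hence $\E[\1_{A^c}D_\lam(0,x)] \le o(M^{-1})\cdot O(\|x\|) = o(\|x\|)$. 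Here $\lam \ge \lam_0$ for some fixed $\lam_0>\lam_c$ (take $\lam_0=\lam$), and the constants may depend on $\lam$, consistent with the statement. Summing the two bounds proves the claim.

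The only subtle point is integrability of $D_\lam$, since it is not a priori bounded on $A^c$; this is handled by \cref{bound on tails of D}. The precise rate $o(M^{-2})$ is more than enough: any $\P(A^c)=o(1)$ would suffice for the $D_\lam$ term, and the $\widetilde{D}$ term needs $\P(A^c) = o(M^{-1})$.
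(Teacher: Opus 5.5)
Your proof is correct and follows the paper's argument: split by the triangle inequality, use the deterministic bound $\widetilde{D}_\lam(0,x)\le M\|x\|$ for one term, and use Cauchy--Schwarz with the $O(\|x\|^2)$ second-moment bound from \cref{bound on tails of D} for the $D_\lam$ term. The only difference is that you bound $\E[\1_{A^c}\widetilde{D}_\lam(0,x)]$ directly by $M\|x\|\,\P(A^c)$ rather than through Cauchy--Schwarz, which is a harmless simplification needing only $\P(A^c)=o(M^{-1})$.
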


\noindent \emph{Proof of \cref{ignore A^c}}:
By the triangle inequality and Cauchy-Schwarz, we have    
\[
\begin{aligned}
\mathbb{E} \left[\1_{A^c}|D_\lam(0, x)-\widetilde{D}_\lam(0, x)| \right]
& \leq \mathbb{E}[\1_{A^c} D_\lam(0, x)]+\mathbb{E}[\1_{A^c} \widetilde{D}_\lam(0, x)] \\
& \leq \sqrt{\mathbb{P}\left(A^c\right)}\left(\sqrt{\mathbb{E}[D_\lam(0, x)^2]}+\sqrt{\mathbb{E}\left[\widetilde{D}_\lam(0, x)^2\right]}\right) .
\end{aligned}
\]

Since, by \cref{A^c is small}, $\sqrt{\mathbb{P}\left(A^c\right)}=o(M^{-1})$, it suffices to show that 
$\mathbb{E}\left[D_\lam(0, x)^2\right], \mathbb{E}\left[\widetilde{D}_\lam(0, x)^2\right]=O(M^2\|x\|^2).$
We have a deterministic bound $\widetilde{D}_\lam(0, x)^2=O\left(M^2 \|x\|^2\right)$, so it remains to consider $\E D_\lam(0, x)^2$. 
Taking $C_1, C_2,c_3$ as in \cref{bound on tails of D},
we have
\begin{align*}
    \E[D_\lam(0,x)^2] &\le
    C_1^2 \|x\|^2 + \sum_{t=\lfloor C_1\|x\|\rfloor}^{\infty}
    (t+1)^2 \P(D_\lam(0,x) > t) \\
    &\le C_1^2\|x\|^2 + C_2\sum_{t=\lfloor C_1\|x\|\rfloor}^{\infty}
    (t+1)^2 \exp(-c_3 t^{.99}) = O(\|x\|^2),
\end{align*}
as desired. 

\hfill $\blacksquare$

Combining \eqref{target bound on A} and \cref{ignore A^c} then gives
\[
    \E[|D_\lam(0,x) - \widetilde{D}_\lam(0,x)|]
    =\E[\1_A|D_\lam(0,x) - \widetilde{D}_\lam(0,x)|]
    +\E[\1_{A^c}|D_\lam(0,x) - \widetilde{D}_\lam(0,x)|]
    = o(\|x\|),
\]
which completes our proof of \cref{approx of D_lam}.
\end{proof}

\section{Bounding the derivative by a sum along the geodesic} \label{sec:derivative bound}

We now use Russo's formula
to bound the derivative of $\E \widetilde{D}_\lam(0,x)$.
Define $f$ so that $f(X_\lam) = \widetilde{D}_\lam(0,x)$. 
For $q \in X_\lam$, define $\Delta_q f(X_\lam) = f(X_{\lambda} \setminus \{q\}) - f(X_{\lambda})$; note that for all $q$, $\Delta_q f(X_\lam) \ge 0$.
Since $f$ only depends on $X_\lam$ restricted to 
the finite-volume set $R(\widetilde{D}) := B(0,M\|x\|) \cap B(x,M\|x\|)$, Russo's formula,  \cref{Russo's formula}, tells us that
\[
\frac{d}{d\lam}\E[f(X_\lam)] =-\frac{1}{\lam}\E\left[\sum_{q\in X_\lam \cap R(\widetilde{D})}\Delta_q f(X_\lam)\right].
\]

There exist a few versions of a Russo-type formula for Poisson processes, for example see \cite{LP17}, \cite{LZ19}, \cite{Z93}, and \cite{L14}; these are related to but not identical to the form we give. We leave the details of our proof in the appendix.

Recall that we define $\widetilde{\pi}$ to be the random open path realizing $\widetilde{D}_\lam(0,x)$, that is, $\widetilde{\pi}$
is an open path in $G_\lam$ between points $\widetilde{0}, \widetilde{x}
\in G_\lam$ with
\[
    \widetilde{D}_\lam(0,x) = |\widetilde{\pi}| + M(\|\widetilde{0}\| + \|x-\widetilde{x}\|).
\]

The main result of this section is the following:
\begin{prop}\label{bounding influence by detours}
    \begin{align*}
    \E\left[ \sum_{q \in X_\lam \cap R(\widetilde{D})} \Delta_q f(X_\lam)\right]  
    &\le \E \left[\sum_{\substack{q \in \widetilde{\pi} 
    }} \1 \left\{\substack{G_\lam \setminus \{q\} \\ p(q) \leftrightarrow s(q)}\right\} d_{G_\lam \setminus \{q\}}(p(q),s(q)) \right]
    +\lam^2 o(\|x\|),
    \end{align*}
    where here $p(q)$ represents the vertex of $\widetilde{\pi}$
    preceding $q$ and $s(q)$ represents the vertex of $\widetilde{\pi}$
    following $q$.
\end{prop}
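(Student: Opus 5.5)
We control each term $\Delta_q f(X_\lam)$ separately, on a high-probability "good" event, and absorb everything outside that event into the $\lam^2 o(\|x\|)$ error.

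\textbf{Which points contribute.} If $q\notin\widetilde{\pi}$, deleting $q$ leaves $\widetilde{\pi}$, $\widetilde{0}$ and $\widetilde{x}$ intact. Hence $f(X_\lam\setminus\{q\})\le f(X_\lam)$, and since $\Delta_q f\ge 0$ we get $\Delta_q f=0$. So only $q\in\widetilde{\pi}$ contribute.

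\textbf{Interior points.} Take $q$ an interior vertex of $\widetilde{\pi}$ such that $p(q)$ and $s(q)$ are still connected in $G_\lam\setminus\{q\}$. Splice a shortest bypass into $\widetilde{\pi}$, replacing the two edges through $q$. This gives a path from $\widetilde{0}$ to $\widetilde{x}$ avoiding $q$, of length at most $|\widetilde{\pi}|-2+d_{G_\lam\setminus\{q\}}(p(q),s(q))$. Therefore
\[
\Delta_q f\le d_{G_\lam\setminus\{q\}}(p(q),s(q)),
\]
which is exactly the displayed summand.

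\textbf{Bad cases.} Two cases remain: $q$ is an endpoint $\widetilde{0}$ or $\widetilde{x}$, or $q$ is interior but its removal disconnects $p(q)$ from $s(q)$. I would introduce a good event $B$ containing the event $A$ from the proof of \cref{approx of D_lam}. In addition, $B$ requires local uniqueness in every box of side $\phi=M$ (or a polylog scale) covering $R(\widetilde{D})$, via \cref{largest component in metric diameter} and a union bound over polynomially many (in $M\|x\|$) boxes. The intended consequence is that the removal of one point cannot split $C_\infty\cap R(\widetilde{D})$ into two large pieces.

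\emph{Interior disconnection.} On $B$, if $p(q)\not\leftrightarrow s(q)$ in $G_\lam\setminus\{q\}$, then one side of the geodesic lies in a small component, of size less than $\|x\|/2$ (using $A_2$). But the pieces of $\widetilde{\pi}$ before and after $q$ lie in $C_\infty$ by \cref{widetilde o infinite component}. Each piece either has at least $\|x\|/4$ vertices or reaches within distance $O(M)$ of the endpoints. I would argue that the piece of $C_\infty$ containing the short side still stays connected to the rest after deleting $q$, using uniqueness of large components in the box around $q$. This rules out disconnection on $B$, or restricts it to $O(1)$ vertices near the endpoints.

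\emph{Endpoints and near-endpoint vertices.} For $\widetilde{0}$ or $\widetilde{x}$, and for the $O(1)$ exceptional vertices, I would bound $\Delta_q f$ crudely. Rerouting from a neighbour of the endpoint, or from $\ring{0}$/$\ring{x}$, costs at most $O(\rho_1 M+M\cdot(8\rho_1+4M))=O(M^2)=o(\|x\|)$ by the same triangle-inequality bounds as in the proof of \cref{approx of D_lam}.

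\textbf{Off the good event.} Since $f\le M\|x\|$ deterministically, $\Delta_q f\le M\|x\|$. So
\[
\E\Big[\1_{B^c}\sum_{q}\Delta_q f\Big]\le M\|x\|\,\E\big[\1_{B^c}\,|X_\lam\cap R(\widetilde{D})|\big].
\]
By Cauchy--Schwarz this is at most $M\|x\|\sqrt{\P(B^c)}\,O(\lam (M\|x\|)^d)$. The $\lam$ factors, together with the Palm/second-moment computation, produce the $\lam^2$ in the statement. This is $o(\|x\|)$ provided $\P(B^c)$ decays faster than any polynomial in $\|x\|$. That holds because $\phi_s/\log s\to\infty$ for $\phi=M=(\log\|x\|)^2$, and \cref{A^c is small} can be strengthened in the same way.

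\textbf{Main obstacle.} The hard step is the interior-disconnection argument: showing from the local uniqueness events that deleting one interior geodesic vertex never disconnects its neighbours except near the endpoints. I would first try a box-by-box argument. Here $p(q)$ and $s(q)$ each have large clusters within their $M$-box after deletion, since the geodesic itself extends at least $M$ in each direction unless near an endpoint. Uniqueness of components of diameter at least $\phi$ then forces the two clusters to coincide.
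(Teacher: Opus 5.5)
Your skeleton matches the paper's: only $q\in\widetilde{\pi}$ contribute, interior vertices are handled by splicing in a bypass, vertices near $\widetilde{0},\widetilde{x}$ get a crude bound, and a superpolynomially unlikely bad event is discarded using $f\le M\|x\|$ and Cauchy--Schwarz. But the step you flag as the main obstacle has a genuine gap, and the event $B$ you propose cannot close it. Local uniqueness of large components of $G_\lam\cap Q$, for a deterministic family of boxes $Q$, says nothing about $G_\lam\setminus\{q\}$. The configuration you must exclude is exactly one where $q$ is a cut vertex of the unique large cluster of its box. Then $G_\lam\cap Q$ still has a single component of large diameter, because the two halves are joined through $q$. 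So $B$ holds, and yet $p(q)\not\leftrightarrow s(q)$ in $G_\lam\setminus\{q\}$. Your sentence ``uniqueness of components of diameter at least $\phi$ then forces the two clusters to coincide'' is valid only if uniqueness holds in the graph with $q$ deleted, simultaneously for all the random points $q$. A union bound over deterministic boxes does not give that. The same objection applies to your use of $A_2$, which controls components of $G_\lam$, not of $G_\lam\setminus\{q\}$.

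The missing idea is a deletion-tolerant good event whose probability is controlled by the Mecke formula (\cref{Palm theory}). The paper takes $A'$: for every $q\in X_\lam\cap R(\widetilde{D})$, the graph $(G_\lam\setminus\{q\})\cap(q+[-\frac M2,\frac M2]^d)$ has at most one component of metric diameter at least $\frac M2-2$. Writing $N_q(G)$ for the complementary event for a graph $G$,
\[
\P((A')^c)\le \E\Big[\sum_{q\in X_\lam\cap R(\widetilde{D})}\1_{N_q(G_\lam\setminus\{q\})}\Big]
=\lam\int_{R(\widetilde{D})}\P\big(N_q(G(X_\lam\cup\{q\})\setminus\{q\})\big)\,dq
=\lam\int_{R(\widetilde{D})}\P\big(N_q(G_\lam)\big)\,dq,
\]
since $G(X_\lam\cup\{q\})\setminus\{q\}=G_\lam$. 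Then \cref{largest component in metric diameter} gives $\lam\,O(M^d\|x\|^d)e^{-cM}$, uniformly in $\lam\ge\lam_0$.

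Centering the box at $q$ also makes the geometry immediate. If $q$ is at distance more than $M$ from both endpoints, the subpaths $\widetilde{0}\to p(q)$ and $s(q)\to\widetilde{x}$ each give a component of diameter at least $\frac M2-2$ in the box. So on $A'$ they coincide in $G_\lam\setminus\{q\}$.

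With $A'$ you need neither $A$, nor \cref{widetilde o infinite component}, nor the size-based case analysis. Dropping $A$ also matters for a second reason: its probability bounds are only available for fixed $\lam$, whereas this error term is later integrated over $\lam$.

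A minor point: the number of vertices of $\widetilde{\pi}$ within distance $M$ of an endpoint is not $O(1)$. The path needs about $M$ steps to leave $B(\widetilde{0},M)$, and the count is only bounded by $O(M^d)$ via \cref{path volume bound}. This is harmless. Restarting the path at $s(q)$ (rather than rerouting through $\ring{0}$, which again needs connectivity in $G_\lam\setminus\{q\}$) gives $\Delta_q f\le M(M+1)$ for each such $q$. The total is then $O(M^{d+2})=o(\|x\|)$.
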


\begin{proof}

First, note that if $q \notin \widetilde{\pi}$ then we have $\Delta_q f(X_\lam) = 0$ since in this case
removing $q$ cannot increase $\widetilde{D}_\lam(0,x)$. Thus we have
\[
    \E\left[ \sum_{q \in X_\lam \cap R(\widetilde{D})} \Delta_qf(X_\lam)\right] 
    =
    \E\left[ \sum_{q \in \widetilde{\pi} } \Delta_qf(X_\lam)\right].
\]

To bound the terms $\Delta_q f(X_\lam)$,
we first define the following ``deletion-tolerant''
uniqueness event:

\[
    A' :=
    \left\{
    \begin{array}{c}
    \forall q \in X_\lam \cap R(\widetilde{D}),
    (G_\lam \setminus \{q\})\cap (q + [-\frac{M}{2},\frac{M}{2}]^d) \mbox{ has at most} \\ \mbox{one component of metric
    diameter at least } (M/2)-2
    \end{array}
    \right\}.
\]

Then we claim
\begin{claim}\label{expectation on A'}
    \begin{align*}
    \E\left[ \1_{A'}\left(\sum_{q \in \widetilde{\pi}} \Delta_q f(X_\lam)\right)\right]  &\le 
    \lam o(\|x\|) 
    + \E \left[\sum_{\substack{q \in \widetilde{\pi} \\ \setminus B(\widetilde{0}, M) \\ \setminus  B(\widetilde{x}, M)}} \1 \left\{{\substack{G_\lam \setminus \{q\} \\ p(q) \leftrightarrow s(q)}} \right\} d_{G_\lam \setminus \{q\}}(p(q),s(q)) \right],
    \end{align*}
    where here $p(q)$ represents the vertex of $\widetilde{\pi}$
    preceding $q$ and $s(q)$ represents the vertex of $\widetilde{\pi}$
    following $q$.
\end{claim}

\emph{Proof of \cref{expectation on A'}.}
   We first bound $\Delta_q f(X_\lam)$
   for $q$ near $\widetilde{0}$ and $\widetilde{x}$.
   We claim that if $q \in B(\widetilde{0},M) \cap \widetilde{\pi}$, then 
   $\Delta_q f(X_{\lambda}) \le M(M+1)$. To see this,
   consider the subpath $\pi'$ of $\widetilde{\pi}$ that starts at $s(q)$
   and continues to $\widetilde{x}$.
   We see that $|\pi'| \le |\widetilde{\pi}|$ and 
   $\|s(q)\| \le \|\widetilde{0}\| + M + 1$, so that  
   \begin{align*}
      f(X_{\lambda} \setminus \{q\})
      &\le 
      M \|s(q)\| + |\pi'| + M\|\widetilde{x}-x\| \\
      &\le
      M(M+1 + \|\widetilde{0}\|) + |\widetilde{\pi}| + M\|\widetilde{x}-x\| \\
      &=
      M(M+1) + f(X_{\lambda}),
   \end{align*}
   which gives the desired bound on $\Delta_q f(X_{\lambda})$.
   The same argument gives the same bound whenever $q$
   lies in $B(\widetilde{x},M) \cap \widetilde{\pi}$.
   Thus we have
   \begin{align*}
       \E\left[ \1_{A'}\left(\sum_{q \in \widetilde{\pi} \cap (B(\widetilde{0},M) \cup B(\widetilde{x}, M)) } 
       \Delta_q f(X_\lam)\right)\right]
       &\le
       \E[M(M+1)|X_\lam \cap (B(\widetilde{0},M) \cup B(\widetilde{x}, M))| ] \\
       &= \lam O(M^{d+2}) = \lam o(\|x\|).
   \end{align*}

   Thus, to prove \cref{expectation on A'}, it only remains to show that 
   for any $q \in \widetilde{\pi} \setminus B(\widetilde{0},M) \setminus B(\widetilde{x},M)$, we have 
   \[
      \ind_{A'} \Delta_q f(X_{\lambda})
      \le \1 \left\{{\substack{G_\lam \setminus \{q\} \\ p(q) \leftrightarrow s(q)}}\right\} d_{G_\lam \setminus \{q\}}(p(q),s(q)).
   \]
   First, we note that if $A'$ holds and $q \in \widetilde{\pi}$
   but $\|q-\widetilde{0}\|, \|q-\widetilde{x}\| > M$, then
   $p(q)$ is connected to $s(q)$ in $G_{\lambda}\setminus \{q\}$.
   This is because if they were disconnected,
   the subpath of $\widetilde{\pi}$ from $\widetilde{0}$ to $p(q)$
   and the subpath of $\widetilde{\pi}$ from $s(q)$ to $\widetilde{x}$
   would lie in two different connected components
   of $q + [-\frac{M}{2},\frac{M}{2}]^d$ which both have large diameter,
   contradicting $A'$. 

   Now, assuming $p(q)$ is connected to $s(q)$ in $G_{\lambda}\setminus \{q\}$, we see that we have a path $\pi'$ of length
   at most $|\widetilde{\pi}| + d_{G_\lam \setminus \{q\}}(p(q),s(q))$
   from $\widetilde{0}$ to $\widetilde{x}$ in $G_{\lam} \setminus \{q\}$
   obtained by composing with a geodesic from $p(q)$ to $s(q)$
   in $G_{\lambda} \setminus \{q\}$. Thus, we have the
   desired bound.

   \hfill $\blacksquare$

    The last step to showing
    \cref{bounding influence by detours}
    is to show that $A'$ is very likely,
    that is:

    \begin{claim}\label{A' event}
    \[
        \E\left[ \1_{(A')^c}
        \left(\sum_{q \in X_\lam \cap R(\widetilde{D})} \Delta_q f(X_\lam)\right)
        \right] = \lam^2 o(\|x\|).
    \]
\end{claim}

\emph{Proof of \cref{A' event}.}
Since we have almost-sure bounds $0 \le f(X_\lam) \le M\|x\|$, 
we see that 
\[ \sum_{q \in X_\lam \cap R(\widetilde{D})} \Delta_q f(X_\lam) \le M\|x\| |X_\lam \cap R(\widetilde{D})|. \] 
$|X_\lam \cap R(\widetilde{D})|$ is Poisson with parameter $\lam \mathrm{vol}(R(\widetilde{D})) =  O(\lam M^d \|x\|^d)$, and so by Cauchy-Schwarz we have
\[
        \E\left[ \1_{(A')^c}
        \left(\sum_{q \in X_\lam \cap R(\widetilde{D})} \Delta_q f(X_\lam)\right) \right]
        \le \lam \sqrt{\P((A')^c)} O(M^{d+1}\|x\|^{d+1}).
\]
To bound $\P((A')^c)$, we use \cref{Palm theory} and \cref{largest component in metric diameter}
to compute
    \begin{align*}
         \P((A')^c) &\le \E\left[\sum_{q \in R(\widetilde{D})} 
         \1  \left\{ \begin{array}{c}
    (G_\lam \setminus \{q\})\cap (q + [-\frac{M}{2},\frac{M}{2}]^d) \mbox{ has more than one} \\ \mbox{component of metric
    diameter at least } (M/2)-2
    \end{array} \right\}
         \right] \\
         &= \lam \int_{q \in R(\widetilde{D})} \P \left( \begin{array}{c}
    (G(X_\lam \cup \{q\}) \setminus \{q\})\cap (q + [-\frac{M}{2},\frac{M}{2}]^d) \mbox{ has more than one} \\ \mbox{component of metric
    diameter at least } (M/2)-2
    \end{array}
         \right)dq \\
        &= \lam \int_{q \in R(\widetilde{D})} \P \left( \begin{array}{c}
    G_\lam \cap (q + [-\frac{M}{2},\frac{M}{2}]^d) \mbox{ has more than one} \\ \mbox{component of metric
    diameter at least } (M/2)-2
    \end{array}
         \right)dq \\
          &= \lam O(M^d \|x\|^d) O(\exp(-\frac{c'}{2}M)) \\
          &= \lam o(\|x\|^{-2d-3}).
     \end{align*} 

\hfill $\blacksquare$

Combining \cref{expectation on A'} and \cref{A' event}
then gives
\begin{align*}
        \E\left[ \sum_{q \in X_\lam \cap R(\widetilde{D})} \Delta_q f(X_\lam)\right]
        &=
         \E\left[ \1_{A'} \sum_{q \in X_\lam \cap R(\widetilde{D})} \Delta_q f(X_\lam)\right] +
        \E\left[ \1_{(A')^c}\sum_{q \in X_\lam \cap R(\widetilde{D})} \Delta_q f(X_\lam)\right] \\
        &\le
        \E \left[\sum_{\substack{q \in \widetilde{\pi} \\ \setminus B(\widetilde{0}, M) \\ \setminus  B(\widetilde{x}, M)}} \1\left\{{\substack{G_\lam \setminus \{q\} \\ p(q) \leftrightarrow s(q)}}\right\} d_{G_\lam \setminus \{q\}}(p(q),s(q)) \right] + \lam^2 o(\|x\|) \\
        &\le
        \E \left[\sum_{\substack{q \in \widetilde{\pi} }} \1 \left\{{\substack{G_\lam \setminus \{q\} \\ p(q) \leftrightarrow s(q)}}\right\} d_{G_\lam \setminus \{q\}}(p(q),s(q)) \right] + \lam^2 o(\|x\|),
\end{align*}
as desired.
\end{proof}

\section{Lattice animals of dependent weight}

We now bound the expression on the right hand side of \cref{bounding influence by detours}.  
To this end,
we apply the greedy lattice animal bounds of Can, Nakajima, and Nguyen \cite{CNN23}.
Though the original statement of the following lemma is in terms of edges, there is no problem in instead considering random variables indexed by vertices.

\begin{lem}[Lemma 2.7 in 
\cite{CNN23}]\label{lattice animal bounds}
Let $(Y_{\mathbf{z}})_{\mathbf{z} \in \Z^d}$ be a family of $\mathbb{N}$-valued random variables indexed by $\Z^d$. Let $A \ge 1$. Suppose that for each $N\ge1$ and each $AN$-separated subset $S$ of $\Z^d$, the family $\{Y_\mathcal{\mathbf{z}} = N\}_{\mathbf{z} \in S}$ is independent. Define $q_N := \sup_{\mathbf{z} \in \Z^d} \P(Y_\mathbf{z} = N)$. Suppose for some $B<\infty$ we have $\sum_{N=0}^\infty N^{d+2}q_N^{1/d} \le B$. Then there exists $C$ depending only on $d, A,$ and $B$ such that the following holds. For any random path $\Pi$ in $\Z^d$ starting from $0$
such that every vertex of the path has $l^\infty$-distance 1 from the preceding vertex, and any $L \in \mathbb{N}$ we have
\[
\E[\sum_{\mathbf{z} \in \Pi}Y_\mathbf{z}] \le CL + C\sum_{\ell \ge L} \ell \P(|\Pi| = \ell)^{1/2}
\]
    
\end{lem}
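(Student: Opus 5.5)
The plan is to decompose the weights by level, reduce each level to a greedy lattice animal problem for \emph{independent} Bernoulli variables on a coarse lattice, and then remove the randomness of $\Pi$ by Cauchy--Schwarz. Write
\[
\sum_{\mathbf{z}\in\Pi} Y_{\mathbf{z}} = \sum_{N\ge 1} N\,\#\{\mathbf{z}\in\Pi: Y_{\mathbf{z}}=N\}.
\]
Fix $N$. Partition $\Z^d$ into the $(\lceil AN\rceil)^d$ cosets $\mathbf{w}+\lceil AN\rceil\Z^d$. Each coset is $AN$-separated, so by hypothesis the indicators $\{Y_{\mathbf{z}}=N\}$ are independent within a coset. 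Each such indicator is Bernoulli with parameter at most $q_N$.

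The first main step is deterministic. Suppose $\Pi$ has $\ell$ steps, each of $l^\infty$-length $1$. Then for each coset, the set $\Pi\cap(\mathbf{w}+\lceil AN\rceil\Z^d)$, after rescaling by $\lceil AN\rceil$, is contained in a lattice animal containing a fixed point near the origin. This animal has size at most $c_d(\ell/(AN)+1)$. To see this, cut $\Pi$ into consecutive blocks of $AN$ steps: each block stays inside a bounded number of coarse cells, and consecutive cells are adjacent. Hence
\[
\#\{\mathbf{z}\in\Pi: Y_{\mathbf{z}}=N\}\le \sum_{\mathbf{w}} \max_{\xi\ni 0,\ |\xi|\le c_d(\ell/(AN)+1)} \sum_{\mathbf{u}\in\xi} \1\{Y_{\mathbf{w}+\lceil AN\rceil \mathbf{u}}=N\}.
\]
The coarse-lattice families on the right are independent Bernoulli families with parameter at most $q_N$.

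The second step is the classical greedy lattice animal estimate of Cox--Gandolfi--Griffin--Kesten for independent Bernoulli($p$) sites. For animals of size $m$ through the origin, the maximal count $\mathcal{N}_m(p)$ satisfies $\E\mathcal{N}_m(p)\le C m p^{1/d}$, and it has an exponential tail at scale $m$: $\P(\mathcal{N}_m\ge t m)\le e^{-ctm}$ for $t$ large. This tail follows from a union bound over the at most $e^{Cm}$ animals together with a Chernoff bound. It gives $\E[\mathcal{N}_m^2]\le C m^2 p^{2/d}+Cm^2e^{-cm}$-type control; in fact I would aim for $\E[\mathcal{N}_m^2]^{1/2}\le C m p^{1/d}$ plus a harmless additive constant. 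Multiplying by $N$ and by the $(AN)^d$ cosets, the level-$N$ contribution for paths of length $\ell$ is at most
\[
C N (AN)^d\Big(\tfrac{\ell}{AN}+1\Big) q_N^{1/d}\le C' \ell\, N^{d+1} q_N^{1/d}
\]
for $\ell\ge 1$. Summing over $N$ uses $\sum_N N^{d+2}q_N^{1/d}\le B$; the extra power of $N$ absorbs the ``$+1$'' terms and the additive constants. So, with $S_\ell$ denoting the supremum of $\sum_{\mathbf{z}\in\pi}Y_{\mathbf{z}}$ over all admissible paths $\pi$ from $0$ of length at most $\ell$, we get $\E[S_\ell^2]^{1/2}\le C\ell$, where $C$ depends only on $d,A,B$. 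Here I combine levels by Minkowski's inequality.

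The final step handles the random path. Split on $|\Pi|<L$ and on $|\Pi|=\ell$ for each $\ell\ge L$:
\[
\E\Big[\sum_{\mathbf{z}\in\Pi}Y_{\mathbf{z}}\Big]\le \E[S_L]+\sum_{\ell\ge L}\E\big[S_\ell\1\{|\Pi|=\ell\}\big]\le CL+\sum_{\ell\ge L}\E[S_\ell^2]^{1/2}\P(|\Pi|=\ell)^{1/2},
\]
which is the claimed bound $CL+C\sum_{\ell\ge L}\ell\,\P(|\Pi|=\ell)^{1/2}$.

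I expect the main obstacle to be the second-moment greedy animal bound with the sharp $p^{1/d}$ dependence, uniformly in $p$. The union-bound/Chernoff argument gives the right order only when $p$ is not too small. For small $p$ I would follow Cox et al.\ and use a further coarse-graining into blocks of side $p^{-1/d}$: in a block the occupied count is typically $O(1)$, and an animal of size $m$ meets about $m p^{1/d}$ blocks. A secondary nuisance is the bookkeeping of the $+1$ terms when $\ell\ll AN$, which is why the hypothesis carries the power $N^{d+2}$ rather than $N^{d+1}$.
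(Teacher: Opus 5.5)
Your overall architecture is sound. The paper gives no proof of its own here; it defers to Lemma 2.7 of \cite{CNN23}, and your route follows the standard greedy-lattice-animal template that hypothesis is tailored for: level sets $\{Y_{\mathbf z}=N\}$, cosets of $\lceil AN\rceil\Z^d$ for independence, domination by i.i.d.\ Bernoulli$(q_N)$ with the $q_N^{1/d}$ scaling, Minkowski over $N$ and cosets, then Cauchy--Schwarz on $\{|\Pi|=\ell\}$. The deterministic coarse-graining step and the final Cauchy--Schwarz step are fine.

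The gap is in the level-$N$ input, and in your claim that additive constants are harmless. After Minkowski, your bound on $\|\mathcal N_m(q_N)\|_2$ is multiplied by $N\lceil AN\rceil^d$ and summed over all $N\ge 1$. So any term that does not carry a factor of $q_N$ is fatal: a constant $c_0$ contributes $\sum_N c_0 N(AN)^d=\infty$. Your $Cm^2e^{-cm}$ term is just as bad, because for fixed $\ell$ and large $N$ the animal size $m=c_d(\ell/(AN)+1)$ stays bounded. The extra power of $N$ in $N^{d+2}$ can only absorb terms that already carry $q_N^{1/d}$.

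Both tools you sketch leave exactly such a term. The union bound plus Chernoff over animals is also at the wrong scale: since $\mathcal N_m\le m$, the bound $\P(\mathcal N_m\ge tm)\le e^{-ctm}$ is vacuous. Blocks of side $p^{-1/d}$ give $\|\mathcal N_m\|_2\le C(mp^{1/d}+1)$, and the regime $mq_N^{1/d}\ll 1$, where that $+1$ dominates, is the one every large level $N$ falls into.

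What you actually need is $\|\mathcal N_m(p)\|_2\le C\,m\,p^{1/d}$ uniformly in $m\ge1$ and $p\in[0,1]$. This requires treating two regimes separately. Put $y=mp^{1/d}$.
\begin{itemize}
\item If $y\le 1$, dominate $\mathcal N_m$ by the number of occupied sites in the $\ell^\infty$-ball of radius $m$ about the base point. This is a binomial variable with second moment at most $C(m^dp+m^{2d}p^2)=C(y^d+y^{2d})\le Cy^2$, since $d\ge 2$.
\item If $y\ge 1$, use the block argument. An animal of size $m$ meets at most $C_d(mp^{1/d}+1)\le 2C_d y$ blocks, there are at most $e^{Ck}$ block animals of size $k$, and each block has mean occupation at most $2^d$. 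This gives $\P(\mathcal N_m\ge ty)\le e^{-ty}$ for $t\ge t_0(d)$, hence $\E\mathcal N_m^2\le Cy^2$. Here the $+1$ is absorbed precisely because $y\ge 1$.
\end{itemize}
With this estimate your bookkeeping closes. In fact it then only uses $\sum_N N^{d+1}q_N^{1/d}<\infty$.
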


The proof of \cref{lattice animal bounds} is exactly the same as the proof of Lemma 2.7 in 
\cite{CNN23}. 

In order to apply this to our setting, we will have to bound our desired quantity (which is a sum over random points in $\R^d$) by a sum over a random subset of $\Z^d$.
To do this, first, for each $\mathbf{z} \in \Z^d$, denote by $Q(\mathbf{z})$ the unit
cube $\mathbf{z} + [-\frac{1}{2},\frac{1}{2}]^d$ centered at $\mathbf{z}$.
Let $\widetilde{\Pi} \subset \Z^d$ be the set of $\mathbf{z} \in \Z^d$
such that the cube $Q(\mathbf{z})$ intersects $\widetilde{\pi}$.
Recall that for each point $q$ in the geodesic $\widetilde{\pi}$ 
we denote the point preceding $q$ in $\widetilde{\pi}$ by $p(q)$ and the point succeeding $q$ by $s(q)$. We then can write:
\begin{equation*}
    \sum_{q \in \widetilde{\pi} }  \1 \left\{{\substack{G_\lam \setminus \{q\} \\ p(q) \leftrightarrow s(q)}}\right\} d_{G_\lam \setminus \{q\}}(p(q),s(q)) 
    =
    \sum_{\mathbf{z} \in \widetilde{\Pi}}
    \sum_{\substack{q \in \widetilde{\pi} \cap Q(\mathbf{z})}} \1\left\{{\substack{G_\lam \setminus \{q\} \\ p(q) \leftrightarrow s(q)}} \right\}d_{G_\lam \setminus \{q\}}(p(q),s(q)).
\end{equation*}

Although this gives us a sum over a random path in $\Z^d$, the summands do not possess
the independence property required by \cref{lattice animal bounds}; therefore
we construct a slightly different variable
in order to employ the method of \cite{CNN23}.
For this, we have to recall
(a version of) the construction used to prove Lemma 3.4 in \cite{YCG11} (and in Antal-Pisztora \cite{AP96} to prove the main theorem).

For a scale $0< R < \infty$ and $z \in \R^d$,
consider the event
\[
    U_{z,R} :=
    \left\{
    \begin{array}{c}
    z + [-R,R]^d \mbox{ is connected to }
    z + ([-4.9R, 4.9R]^d)^c, \\
    z + [-5R, 5R] \mbox{ contains 
    at most one component of diameter 
    at least } R/5
    \end{array}
    \right\}.
\]

\begin{lem} \label{white likely}
    Let $\lam_0 > \lam_c$. Given $\epsilon > 0$, there exists $R_0=R_0(\epsilon,\lam_0,d)$ such that 
    for all $R \ge R_0$,
    \[
        \sup_{\lam \ge \lam_0}
        \Prob(U_{z,R}^c) \le \epsilon
    \]
    for every $z \in \R^d$.
\end{lem}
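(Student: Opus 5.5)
By translation invariance of the Poisson process we may take $z=0$. We bound $\P(U_{0,R}^c)$ by a union bound over the two ways the event can fail:
\[
\P(U_{0,R}^c) \le \P\big([-R,R]^d \not\leftrightarrow ([-4.9R,4.9R]^d)^c\big) + \P\big(\text{$[-5R,5R]^d$ has two components of diameter} \ge R/5\big).
\]
We show that each term tends to $0$ as $R \to \infty$, uniformly in $\lam \ge \lam_0$.

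\emph{Connection term.} If $[-R,R]^d$ meets $C_\infty$, then $[-R,R]^d$ is connected to $([-4.9R,4.9R]^d)^c$, since $C_\infty$ is unbounded and connected. Hence the first term is at most $\P^\lam(C_\infty\cap B(0,R)=\emptyset)$. For uniformity we use the coupling from the proof of \cref{bound on tails of D}: $X_\lam \stackrel{d}{=} (\lam_0/\lam)^{1/d}X_{\lam_0}$, and under this coupling $C_\infty(X_\lam)$ contains the scaled copy of $C_\infty(X_{\lam_0})$. Because the scaling factor is at most $1$, this gives
\[
\P^\lam(C_\infty\cap B(0,R)=\emptyset)\le \P^{\lam_0}(C_\infty\cap B(0,R)=\emptyset).
\]
By \cref{Yao o close in infinite comp} applied at $\lam_0$, the right-hand side is at most $\exp(-cR^{d-1})$ for large $R$. (Alternatively, this event is increasing, so monotone coupling in $\lam$ gives the same comparison directly.)

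\emph{Uniqueness term.} Apply \cref{largest component in metric diameter} with $s = 10R$ and $\phi_s = s/50 = R/5$. This choice is increasing, satisfies $\phi_s/\log s\to\infty$, and has $\phi_s\le s$. The proposition then yields $\sup_{\lam\ge\lam_0}\P^\lam(E_s^c)\le \exp(-c\phi_s)$ for large $s$, which tends to $0$. The only point needing care is matching ``diameter'' in $U_{z,R}$ with ``metric diameter'' in the proposition. I read both as the Euclidean diameter of the component's vertex set, or of its union of edges; these differ by at most $2$, so one can shrink $\phi_s$ slightly, say to $R/5-2$, if needed.

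\emph{Conclusion.} Both bounds decay in $R$ uniformly over $\lam\ge\lam_0$, so we can choose $R_0$ with the total at most $\epsilon$ for all $R\ge R_0$. The main obstacle is ensuring uniformity in $\lam$. This is handled by the uniform version of Penrose's proposition recorded in the Remark, together with the scaling/monotone coupling for the connection event.
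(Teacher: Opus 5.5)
Your proposal is correct and matches the paper's argument: the paper's proof is a one-line appeal to \cref{Yao o close in infinite comp} for the crossing part of $U_{z,R}$ and to the uniform version of \cref{largest component in metric diameter} for the uniqueness part, which is exactly your union bound with $s=10R$, $\phi_s=R/5$. Your use of the scaling (or monotone) coupling to get uniformity in $\lambda\ge\lambda_0$ for the crossing term fills in a detail the paper leaves implicit; the paper uses the same coupling in the proof of \cref{bound on tails of D}.
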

\begin{proof}
This follows immediately from \cref{Yao o close in infinite comp} and \cref{largest component in metric diameter}.
\end{proof}

For $\epsilon > 0$ to be fixed later,
take $R(\epsilon,\lam_0)$ as above.
Also fix an ``origin'' $\mathbf{z} \in \Z^d$.
Then, for $w \in \Z^d$, call $w$ \emph{white}
if $U_{\mathbf{z}+Rw,R}$ holds, and call $w$
\emph{black} otherwise.
Say that $w,w' \in \Z^d$ are $*$-adjacent if they have $l^\infty$-distance one, i.e.
for each $i = 1,...,d$,
$|w_i - w_i'| \le 1$.

Let $S$ be the set of $a \in \Z^d$
$*$-adjacent to $0$. 
For $a \in \Z^d$, we denote by $\mathfrak{C}_a^*$
the $*$-connected component of black sites
containing $a$,
where recall that $w$
is black if $U_{\mathbf{z} + Rw}^c$ holds.
Let $\mathcal{C}(\mathbf{z}) := \{0\} \cup \bigcup_{a \in S}  \mathfrak{C}_a^*$, let $\overline{\mathcal{C}}(\mathbf{z})$
be the set of sites either
contained in $\mathcal{C}(\mathbf{z})$ or $*$-adjacent to an element of $\mathcal{C}(\mathbf{z})$, and then take
$W(\mathbf{z}) = \bigcup_{w \in \overline{\mathcal{C}}(\mathbf{z})} (\mathbf{z} + Rw + [-5R,5R]^d)$.
The key property of this construction is the following:
\begin{lem} \label{short path}
  If $p,q \in \mathbf{z} + [-R/2,R/2]^d$
are connected in a graph $G$ which
agrees with $G_\lam$ outside 
of $\mathbf{z} + [-R,R]^d$,
then they are connected by a path in $G \cap W(\mathbf{z})$.
\end{lem}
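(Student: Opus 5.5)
Assume $p,q \in \mathbf{z}+[-R/2,R/2]^d$ are connected in $G$ by some path $\gamma$. The plan is the standard Antal--Pisztora style argument: follow $\gamma$ until it leaves $W(\mathbf{z})$, and use a boundary layer of white sites around $\mathcal{C}(\mathbf{z})$ to reroute it inside $W(\mathbf{z})$.

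If $\gamma$ already stays inside $W(\mathbf{z})$ there is nothing to prove, since $0 \in \mathcal{C}(\mathbf{z})$ and $\mathbf{z}+[-5R,5R]^d \subset W(\mathbf{z})$. Otherwise, write $\partial := \overline{\mathcal{C}}(\mathbf{z}) \setminus \mathcal{C}(\mathbf{z})$ for the outer $*$-boundary. We first check that every $w \in \partial$ is white. If $w$ is $*$-adjacent to some $a \in S$ or to some element of $\mathfrak{C}_a^*$, and $w$ were black, then $w$ would belong to that $*$-cluster (or be in $S$, hence in $\mathcal{C}$), which is a contradiction. Note also that $\partial$ contains all of $S$ not in $\mathcal{C}$; in fact $S \subset \overline{\mathcal{C}}$ because $0 \in \mathcal{C}$.

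The key topological fact, which is the main obstacle, is that the boundary $\partial$ separates $0$ from infinity. Moreover, for any exit of $\gamma$, the path $\gamma$ must cross a box $\mathbf{z}+Rw+[-R,R]^d$ with $w \in \partial$, and in fact must go from that box out to distance $4.9R$ of its center. This is where the hypothesis that $G$ agrees with $G_\lam$ outside $\mathbf{z}+[-R,R]^d$ enters: every $w \in \partial$ has $w \ne 0$, so its event $U$ concerns only $G_\lam$ away from $\mathbf{z}+[-R,R]^d$... except that the boxes $\mathbf{z}+Rw+[-5R,5R]^d$ do overlap $\mathbf{z}+[-R,R]^d$. This is the delicate point. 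I would handle it by noting that only the portion of $\gamma$ outside $\mathbf{z}+[-R,R]^d$, where $G=G_\lam$, is used in the crossing argument, and that a crossing from radius $R$ to radius $4.9R$ around a boundary box of side comparable to $R$ has diameter at least $R/5$ outside the central box.

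Given this, we use the white boundary sites to build a loop-like structure. For each white $w$, the box $\mathbf{z}+Rw+[-R,R]^d$ connects to distance $4.9R$, which gives a component of diameter at least $R/5$ inside $\mathbf{z}+Rw+[-5R,5R]^d$, and this component is unique there. For $*$-adjacent white $w,w'$, their crossing clusters overlap in the $5R$-box and so are the same component. Hence the crossing clusters of all white sites in a $*$-connected piece of $\partial$ form a single connected set inside $W(\mathbf{z})$. Let $t_1$ be the first time $\gamma$ enters the $5R$-box of a site of $\partial$ in a crossing manner, and $t_2$ the last such time. The segment of $\gamma$ near $t_1$ then has diameter at least $R/5$ in that box, so by uniqueness it belongs to the crossing cluster; the same holds at $t_2$. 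Replacing the middle portion of $\gamma$ by a path through the union of crossing clusters, which is connected provided the relevant part of $\partial$ is $*$-connected, keeps the path inside $W(\mathbf{z})$. The $*$-connectedness of the outer boundary of a finite $*$-connected set, here $\mathcal{C}(\mathbf{z})$ together with its neighbors, is a standard lattice fact (Deuschel--Pisztora / Timár), which I would cite. Finiteness of $\mathcal{C}(\mathbf{z})$ is implicit; if it were infinite, $W(\mathbf{z})$ would be unbounded, and the argument would be adapted by working with large finite truncations.
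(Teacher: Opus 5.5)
Your outline is the Antal--Pisztora argument, which is also all the paper gives (its proof just cites \cite{AP96}). But there is a genuine gap exactly at the point you flag, and it cannot be closed with $W(\mathbf{z})$ as defined.

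Your remedy only shows that the pieces of $\gamma$ lying outside $\mathbf{z}+[-R,R]^d$ belong to the crossing clusters $K_w$. Those clusters, and the overlaps you use to chain $K_w$ to $K_{w'}$, are components of $G_\lam\cap(\mathbf{z}+Rw+[-5R,5R]^d)$. That is connectivity in $G_\lam$, not in $G$. For $w\in S$ the box $\mathbf{z}+Rw+[-5R,5R]^d$ contains all of $\mathbf{z}+[-R,R]^d$. So both the link from $\gamma$'s piece to the rest of $K_w$ and the links between consecutive clusters may run through the region where $G\neq G_\lam$; in the application, they may run through the deleted vertex itself. What you construct is a path in $G_\lam\cap W(\mathbf{z})$, not in $G\cap W(\mathbf{z})$.

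In fact the statement as written fails. Take $d=2$, $\mathbf{z}=0$, and $\Gamma=\{0.3\}\times[-6R-2,6R+2]$. Consider a very dense, well-spread configuration containing $p=(-0.3,0)$ and $s=(0.9,0)$ and no point within distance $1/2$ of $\Gamma$, plus the single point $q=(0.3,0)$. Let $G=G_\lam\setminus\{q\}$.

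\begin{itemize}
\item For every $w\in S$ the box $Rw+[-5R,5R]^2\supseteq[-4R,4R]^2$ contains $p,q,s$, so in $G_\lam$ the two sides of the corridor are joined inside it through $q$. Hence every $w\in S$ is white (crossing is clear by density), $\mathcal{C}(0)=\{0\}$, and $W(0)=[-6R,6R]^2$.
\item An edge of length at most $1$ between vertices of $W(0)$ that meets the line $x=0.3$ meets it on $\Gamma$. Both endpoints would then be at distance more than $1/2$ from the crossing point, which is impossible. So no edge of $G\cap W(0)$ crosses $\Gamma$, and $p,s$ are disconnected in $G\cap W(0)$.
\item In $G$ itself, $p$ and $s$ are connected around the ends of $\Gamma$, outside $W(0)$.
\end{itemize}

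A Poisson configuration approximating this one has positive probability.

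The repair is to make the boundary boxes avoid the modified region. For example, set $\mathcal{C}(\mathbf{z}):=\{w:\|w\|_\infty\le 7\}\cup\bigcup_{\|a\|_\infty\le 8}\mathfrak{C}^*_a$. Then every site of $\overline{\mathcal{C}}(\mathbf{z})\setminus\mathcal{C}(\mathbf{z})$ is white with $\|w\|_\infty\ge 8$, so its $5R$-box is disjoint from $\mathbf{z}+[-R,R]^d$. There $G$ and $G_\lam$ coincide, so the crossing clusters and their overlaps are $G$-connected. Your argument then goes through: the outer boundary of a finite $*$-connected set is $*$-connected, the crossing clusters chain together, and $\gamma$ can be rerouted. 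The tail and independence bounds for $Y_{\mathbf{z}}$ change only by constants.
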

\begin{proof}
    The argument
    is the same as that from \cite{AP96}
(see also \cite{GorskiProcaccia} for an argument using homology).
\end{proof}

We then use the following observation, which roughly states that edge-geodesics lying inside a set have length at most a constant times the volume of the set.

\begin{prop} \label{path volume bound}
    Given $d \ge 2$, there exists $C_d < \infty$ such that the following holds.
    If $G$ is an induced subgraph of $G_\lam$, for any $W \subset \R^d$ 
    and any path $\gamma$ in $G$, 
    there exists a self-avoiding path $\gamma'$ in $G$
    with the same endpoints as $\gamma$
    such that all vertices of $\gamma'$ lie in $\gamma$, and
    \[
        |\gamma' \cap W| \le C_d \mathrm{vol}(W + B(1/2)).
    \]
    Moreover,
    we can either take $\gamma = \gamma'$
    or $|\gamma'| < |\gamma|$.
\end{prop}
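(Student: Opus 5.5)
We will use a shortcutting procedure followed by a packing argument. Write $\gamma = (v_0, v_1, \dots, v_n)$. First pass to a self-avoiding path by erasing loops; this only removes vertices and keeps the endpoints. Then shortcut greedily. Whenever the current path contains two vertices $v_i, v_j$ with $j \ge i+2$ and $\|v_i - v_j\| \le 1$, the edge $\{v_i,v_j\}$ is present in $G$, because $G$ is an induced subgraph of $G_\lam$ and both vertices lie in $G$. We therefore delete $v_{i+1},\dots,v_{j-1}$ and join $v_i$ directly to $v_j$.

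Each shortcut strictly decreases the number of edges, so the procedure terminates. The result is a path $\gamma'$ in $G$ with the same endpoints, whose vertices all lie in $\gamma$. It has the property that any two vertices which are not consecutive along $\gamma'$ are at Euclidean distance strictly greater than $1$. If no loop erasure or shortcut was ever performed, then $\gamma'=\gamma$; otherwise $|\gamma'|<|\gamma|$. This gives the ``moreover'' clause.

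For the volume bound, split the vertices of $\gamma' \cap W$ by the parity of their index along $\gamma'$. Two distinct vertices with indices of the same parity have index difference at least $2$, so they are not consecutive and hence are at distance greater than $1$. The balls $B(v,1/2)$ centered at the even-indexed vertices of $\gamma'\cap W$ are therefore pairwise disjoint, and each is contained in $W + B(1/2)$. Counting volume, the number of even-indexed vertices is at most $\mathrm{vol}(W+B(1/2))/\mathrm{vol}(B(1/2))$, and the same holds for the odd-indexed ones. Hence the claim holds with $C_d = 2/\mathrm{vol}(B(1/2))$.

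The main point needing care is the claim that shortcut edges stay in $G$. This is exactly where the hypothesis that $G$ is an induced subgraph is used: it holds both for $G_\lam$ itself and for $G_\lam\setminus\{q\}$, which are the cases of interest. We must also make sure the greedy procedure yields the non-adjacency property for all non-consecutive pairs. This follows because it only stops when no such pair remains.
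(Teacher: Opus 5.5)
Your proof is correct and follows essentially the same route as the paper: shortcut along edges of the induced subgraph until non-consecutive vertices of $\gamma'$ are at distance greater than $1$, then pack disjoint balls of radius $1/2$ into $W+B(1/2)$ at a loss of a factor of $2$, which gives the same constant $C_d = 2/\mathrm{vol}(B(1/2))$. Your parity split is an explicit version of the paper's maximal subset with no two $\gamma'$-adjacent vertices. Your preliminary loop erasure makes the self-avoidance of $\gamma'$ explicit, which the paper leaves implicit.
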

\begin{proof}
    First note that we can construct
    $\gamma'$ with the same endpoints
    as $\gamma$ with the property
    that any pair of vertices in $\gamma' \cap W$
    which are not adjacent in $\gamma'$
    have distance larger than $1$; we do so as follows.
    If $\gamma$ already has this
    property, take $\gamma' = \gamma$.
    Otherwise, take $\gamma$,
    choose a pair of
    non-adjacent vertices $x,y \in \gamma$
    with $\|x - y\| \le 1$, and replace the
    subpath of $\gamma$ from the first instance of $x$ to the last instance of $y$
    with the edge $e=\{x,y\}$
    (which is in $G$).
    This modification strictly decreases the length of $\gamma$, and it decreases the number of pairs
    of non-adjacent vertices with distance at most one by at least one;
    it also does not change the endpoints
    of $\gamma$.
    After inductively applying this
    procedure finitely many times,
    we will have $\gamma'$ with the desired property.

    Next, there
    exists a 1-separated vertex subset 
    $S \subset W \cap \gamma'$ 
    such that $|S| \ge \frac{1}{2}|W \cap \gamma'|$;
    simply take $S$ to be a subset of maximal size among subsets which contain 
    no pair of vertices adjacent in $\gamma'$.

Then we have the inclusion of disjoint balls
\[
\bigsqcup_{s \in S} B\left(s,\frac{1}{2}\right) \subseteq W+B\left(0,\frac{1}{2}\right) 
\]
which implies the volume bound
\[
    |S| \mathrm{vol}\left(B_d\left(1/2\right)\right) \le \mathrm{vol}(W+B(1/2)).
\]
So we have
\[
|\gamma' \cap W| \le 2 |S| \le  \frac{2}{\mathrm{vol}(B_d(1/2))} \mathrm{vol}(W+B(1/2)),\]
and then we can take $C_d := 2/\mathrm{vol}(B_d(1/2))$, which depends only on $d$, and we are done.
\end{proof}

Now we can use all of the above to construct a variable $Y_\mathbf{z}$ which satisfies the hypotheses of \cref{lattice animal bounds} and which bounds our desired quantity up to a constant factor.

\begin{prop} \label{justify Yz}
   Let $R$ be fixed.
   For each $\mathbf{z} \in \Z^d$, define the random variable
   \[
      Y_{\mathbf{z}} := 
      |\overline{\mathcal{C}}(\mathbf{z})|
   \]
   where $\overline{\mathcal{C}}(\mathbf{z})$
   is as defined above.
   Then there exists some constant $C'_{d,R} < \infty$
   depending only on $d$ and $R$ such that 
\[
\sum_{\substack{q \in \widetilde{\pi} }} \1 \left\{{\substack{G_\lam \setminus \{q\} \\ p(q) \leftrightarrow s(q)}}\right\} d_{G_\lam \setminus \{q\}}(p(q),s(q)) 
\le
C'_{d,R}\sum_{\mathbf{z} \in \widetilde{\Pi}}
Y_{\mathbf{z}}.
\]
\end{prop}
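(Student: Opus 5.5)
We bound each summand separately and then regroup the summands by the cube of $\widetilde{\Pi}$ that contains $q$. Fix $\mathbf{z} \in \widetilde{\Pi}$ and a point $q \in \widetilde{\pi} \cap Q(\mathbf{z})$ with $p(q) \xleftrightarrow{G_\lam \setminus \{q\}} s(q)$. The goal is
\[
d_{G_\lam \setminus \{q\}}(p(q),s(q)) \le C_d \,\mathrm{vol}\big(W(\mathbf{z}) + B(1/2)\big) \le C_d (10R+1)^d\, Y_{\mathbf{z}}.
\]
The second inequality holds because $W(\mathbf{z})$ is a union of $|\overline{\mathcal{C}}(\mathbf{z})| = Y_{\mathbf{z}}$ cubes of side $10R$.

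\textbf{Step 1: the bypass can be taken inside $W(\mathbf{z})$.} Set $G = G_\lam \setminus \{q\}$. This graph agrees with $G_\lam$ outside $\mathbf{z}+[-R,R]^d$, since $q \in Q(\mathbf{z}) \subset \mathbf{z} + [-1/2,1/2]^d$ and we take $R \ge 1$.
\begin{itemize}
\item The points $p(q)$ and $s(q)$ lie within distance $1$ of $q$, so they lie in $\mathbf{z} + [-3/2,3/2]^d \subset \mathbf{z}+[-R/2,R/2]^d$ once $R \ge 3$. We may enlarge $R$ if needed, since \cref{white likely} allows any $R \ge R_0$.
\item They are connected in $G$ by assumption.
\end{itemize}
So \cref{short path} gives a path $\gamma$ from $p(q)$ to $s(q)$ in $G \cap W(\mathbf{z})$.

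\textbf{Step 2: bound the length of the bypass.} Apply \cref{path volume bound} to the induced subgraph $G \cap W(\mathbf{z})$ of $G_\lam$, with $W = W(\mathbf{z})$. This produces a path $\gamma'$ with the same endpoints. All vertices of $\gamma'$ are vertices of $\gamma$, so $\gamma' \subset W(\mathbf{z})$, and hence
\[
|\gamma'| \le |\gamma' \cap W(\mathbf{z})| \le C_d\,\mathrm{vol}\big(W(\mathbf{z})+B(1/2)\big).
\]
Here we use that a self-avoiding path has fewer edges than vertices; if the count is off by one, a constant absorbs it. Therefore
\[
d_{G}(p(q),s(q)) \le C_d\,(10R+1)^d\, Y_{\mathbf{z}}.
\]

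\textbf{Step 3: sum over $q$.} Summing over $q \in \widetilde{\pi}\cap Q(\mathbf{z})$ multiplies this bound by $|\widetilde{\pi}\cap Q(\mathbf{z})|$. This is the main obstacle, since a priori the number of geodesic points in one cube is random. The fix is that $\widetilde{\pi}$ is a graph geodesic, so two of its vertices at Euclidean distance at most $1$ must be consecutive along $\widetilde{\pi}$; otherwise there would be a shortcut. A unit cube has diameter $\sqrt d$, so the number of vertices of $\widetilde{\pi}$ in $Q(\mathbf{z})$ is bounded by a constant $K_d$, namely twice the packing number of $1$-separated points in a set of diameter $\sqrt{d}$, by the same argument as in \cref{path volume bound}.

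Summing over $\mathbf{z}\in\widetilde{\Pi}$ then yields the claim with
\[
C'_{d,R} = K_d\, C_d\,(10R+1)^d.
\]
One detail needs checking in Step 1: the hypothesis of \cref{short path} requires a graph agreeing with $G_\lam$ outside $\mathbf{z}+[-R,R]^d$. Deleting $q$ removes only $q$ and the edges at $q$, all of which lie inside $\mathbf{z}+[-R,R]^d$, so the hypothesis holds.
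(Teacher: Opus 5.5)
Your proof is correct and follows the paper's argument essentially step for step. You take the bypass inside $W(\mathbf{z})$ via \cref{short path}, bound its length via \cref{path volume bound}, and bound $|\widetilde{\pi}\cap Q(\mathbf{z})|$ by a constant using the geodesic property of $\widetilde{\pi}$. Your explicit check that $p(q),s(q)\in\mathbf{z}+[-R/2,R/2]^d$, which needs $R\ge 3$, supplies a hypothesis of \cref{short path} that the paper uses without stating.
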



\begin{proof}
    To avoid clutter, for three points $p,q,s \in G_\lam$ let us define
    \[
       Z_{p,q,s} := \1 \left\{\substack{G_\lam \setminus \{q\} \\ p \leftrightarrow s}\right\} d_{G_\lam \setminus \{q\}}(p,s).
    \]
    Then the quantity we want to bound is
    \begin{align*}
    \sum_{\substack{q \in \widetilde{\pi}} } Z_{p(q),q,s(q)} 
    &=
    \sum_{\mathbf{z} \in \widetilde{\Pi}}
    \sum_{\substack{q \in \widetilde{\pi} \cap Q(\mathbf{z})}} Z_{p(q),q,s(q)} 
    \end{align*}
    By \cref{short path},
    if $p,s$ adjacent to $q \in Q(\mathbf{z})$ are connected in $G_\lam \setminus \{q\}$, then they are connected by a path $\gamma$ in $W(\mathbf{z})$.
    By \cref{path volume bound},
    we then have a path $\gamma'$
    from $p$ to $s$ in $G_\lam \setminus \{q\} \cap W$ with
    \begin{align*}
        |\gamma'|
        &\le C_d \mathrm{vol}(W + B_d(1/2)) \\
        &\le C_d |\overline{\mathcal{C}}(\mathbf{z})|(10R+1)^d.
    \end{align*}
    This gives a bound for each $Z_{p,q,s}$,
    and therefore we have
    \begin{align*}
        \sum_{\substack{q \in \widetilde{\pi}} } Z_{p(q),q,s(q)} 
    &\le
    \sum_{\mathbf{z} \in \widetilde{\Pi}}
    \sum_{\substack{q \in \widetilde{\pi} \cap Q(\mathbf{z})}} C_d(10R+1)^d |Y_\mathbf{z}| \\
    &= 
    C_d (10R+1)^d 
    \sum_{\mathbf{z} \in \widetilde{\Pi}} |\widetilde{\pi} \cap Q(\mathbf{z})|
    |Y_\mathbf{z}| .
    \end{align*}
    But since $\widetilde{\pi}$
    is a minimal edge path from $\widetilde{0}$
    to $\widetilde{x}$,
    \cref{path volume bound}
    applied to $\widetilde{\pi}$
    shows that 
    \[
        |\widetilde{\pi} \cap Q(\mathbf{z})|
        \le C_d \mathrm{vol}(Q(\mathbf{z}) + B_d(1/2))
        \le 2^d C_d.
    \]
    Combining all of the above then gives
    \begin{align*}
        \sum_{\substack{q \in \widetilde{\pi}} } Z_{p(q),q,s(q)} 
        \le
        2^d C_d^2 (10R + 1)^d \sum_{\mathbf{z} \in \widetilde{\Pi}} |Y_{\mathbf{z}}|,
    \end{align*}
    and we are done.
\end{proof}

Now we establish that the family $(Y_{\mathbf{z}})_{\mathbf{z} \in \Z^d}$
satisfies the hypotheses of \cref{lattice animal bounds}. 

\begin{lem} \label{independence}
    For each $R< \infty, d\ge 2$ there exists $A<\infty$ depending only on $d$ and $R$ such that the following holds.
    For any $N \ge 1$ and any $AN$-separated subset $S \subset \Z^d$, the family of events
    $\{Y_{\mathbf{z}} = N\}_{\mathbf{z} \in S}$ is independent.
\end{lem}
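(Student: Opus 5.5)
The plan is to show that the event $\{Y_{\mathbf{z}} = N\}$ is measurable with respect to the Poisson process restricted to a bounded region around $\mathbf{z}$ of diameter $O(RN)$. Independence for well-separated $\mathbf{z}$ will then follow from the independence of a Poisson process on disjoint sets. Concretely, we aim to show that $\{Y_{\mathbf{z}} = N\}$ is determined by $X_\lam \cap (\mathbf{z} + [-c_d R N, c_d R N]^d)$ for a constant $c_d$ depending only on $d$. Granting this, take $A := 2c_d R + 2$. If $S$ is $AN$-separated, meaning its points are at $l^\infty$-distance at least $AN$ from each other, then the boxes $\mathbf{z} + [-c_dRN, c_dRN]^d$ for $\mathbf{z}\in S$ are pairwise disjoint. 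The events $\{Y_{\mathbf{z}}=N\}_{\mathbf{z}\in S}$ are then functions of the process on disjoint sets, hence mutually independent.

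The first ingredient is locality of colours. The event $U_{\mathbf{z}+Rw,R}$ depends only on $X_\lam$ restricted to $\mathbf{z} + Rw + [-5R,5R]^d$, up to a boundary layer of width $1$, which we absorb by enlarging to $[-6R,6R]^d$ (using $R \ge 1$). This holds because connection of $\mathbf{z}+Rw+[-R,R]^d$ to the complement of the $4.9R$-box only involves paths inside the $5R$-box, up to their first exit. It needs a little care to say that \emph{connected to the complement} is witnessed by a path that we stop at the first point outside $[-4.9R,4.9R]^d$; all vertices of that path before the exit lie in the $4.9R$-box, and the last lies within distance $1$ of it. The uniqueness clause is explicitly about $G_\lam$ restricted to the $5R$-box. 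So the colour of each site $w$ is a function of $X_\lam \cap (\mathbf{z} + Rw + [-6R,6R]^d)$.

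The second ingredient, and the main step, is an exploration argument. We reveal $\overline{\mathcal{C}}(\mathbf{z})$ site by site, starting from $0$ and its $*$-neighbours $S$, and at each stage reveal the colours of $*$-neighbours of black sites already found. On the event $\{Y_{\mathbf{z}} = N\}$, the set $\overline{\mathcal{C}}(\mathbf{z})$ is a $*$-connected set of $N$ sites containing $0$, so it lies in $[-N,N]^d$. Moreover, the exploration certifies this: once it has found $\mathcal{C}(\mathbf{z})$ together with its $*$-boundary, consisting of white sites or members of $\{0\}\cup S$, it has determined $\overline{\mathcal{C}}(\mathbf{z})$. Formally, for any finite set $F \ni 0$ of sites, $\{\overline{\mathcal{C}}(\mathbf{z}) = F\}$ is determined by the colours of sites in $F$. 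These are: black for the elements of $\mathcal{C}(\mathbf{z})\setminus\{0\}$, white for the boundary sites not in $\{0\}\cup S$, and the relevant colours of $S$ and $0$. Hence
\[
\{Y_{\mathbf{z}} = N\} = \bigcup_{F \ni 0,\ |F| = N,\ F\ *\text{-connected}} \{\overline{\mathcal{C}}(\mathbf{z}) = F\},
\]
and every event in this union is determined by colours of sites in $[-N,N]^d$. By the first ingredient, these colours depend on $X_\lam \cap (\mathbf{z} + [-(N+6)R, (N+6)R]^d)$. Since $N\ge1$, this gives $c_d = 7$.

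I expect the main obstacle to be the careful verification that $\{\overline{\mathcal{C}}(\mathbf{z}) = F\}$ really depends only on colours inside $F$. The delicate points are the sites of $S$, which are included whether black or white, and the site $0$, which is included regardless of colour. I will handle this by listing the conditions explicitly. First, $\mathcal{C}(\mathbf{z}) = \{0\}\cup\{\text{black components meeting } S\}$. Second, $F$ equals this set plus its $*$-neighbours. Third, every $*$-neighbour of a black site of $\mathcal{C}(\mathbf{z})\setminus\{0\}$ that is not in $\mathcal{C}(\mathbf{z})$ is white; such a site lies in $F$. The remaining locality of $U$ is routine.
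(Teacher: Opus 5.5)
Your proposal is correct and follows essentially the same route as the paper. You show that $\{Y_{\mathbf{z}}=N\}$ is determined by $X_\lam$ on a region of radius $O(RN)$ around $\mathbf{z}$, because $\overline{\mathcal{C}}(\mathbf{z})$ is a $*$-connected set of $N$ sites containing $0$ and each site's colour is local, and then use independence of the Poisson process on disjoint regions. Your explicit check that $\{\overline{\mathcal{C}}(\mathbf{z})=F\}$ depends only on colours in $F$, and your handling of the width-one boundary layer, make rigorous what the paper asserts in one line with the ball $B(\mathbf{z},10dRN)$ and $A=30dR$. The boundary-layer step needs $R\ge 1$, which is harmless here; otherwise use a box of radius $5R+1$.
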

\begin{proof}
    For a fixed $R$,
    the event $\{Y_\mathbf{z} = |\overline{\mathcal{C}}(\mathbf{z})| = N\}$
    is witnessed by a 
    connected set in $\Z^d$
    containing $0$
    with size at most $N$,
    and the state of each site $w \in \Z^d$
    is determined by 
    the restriction of $X_\lam$ to $\mathbf{z} + Rw  + [-5R,5R]^d$.
    Therefore this event
    depends only on $X_\lam$
    restricted to $B(\mathbf{z},10dRN)$.


    Therefore, if $S \subset \Z^d$ is a $(30dR N)$-separated subset,
    the events $\{Y_{\mathbf{z}} = N\}_{\mathbf{z} \in S}$ all 
    depend on $X_\lam$ restricted to pairwise disjoint regions, and hence
    are independent. Taking $A=30dR$, we are done.
\end{proof}

\begin{lem} \label{Y moment bound}
    Let $\lam_0 > \lam_c$. Then there exist
    $R < \infty$ and $B < \infty$ depending
    only on $d$ and $\lam_0$ such that
    the following holds.
    If we define $Y_{\mathbf{z}}$
    as in \cref{justify Yz}
    with parameter $R$, then
    for all $\lam \ge \lam_0$ we have
    \[
        \sum_{N=0}^\infty N^{d+2} q_N^{1/d} \le B,
    \]
    where
    \[
        q_N := \sup_{\mathbf{z} \in \Z^d} 
        \P^\lam(Y_{\mathbf{z}} = N).
    \]
\end{lem}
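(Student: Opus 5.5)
The plan is a Peierls-type counting argument for $*$-connected clusters of black sites, made uniform in $\lam \ge \lam_0$ by choosing $R$ from \cref{white likely}.

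\emph{Reduction to lattice animals.} If $Y_{\mathbf{z}} = |\overline{\mathcal{C}}(\mathbf{z})| = N$, then $|\mathcal{C}(\mathbf{z})| \ge N/3^d$, because each site of $\overline{\mathcal{C}}(\mathbf{z})$ is $*$-adjacent to a site of $\mathcal{C}(\mathbf{z})$. Now $\mathcal{C}(\mathbf{z}) = \{0\}\cup\bigcup_{a\in S}\mathfrak{C}^*_a$, and $|S| = 3^d - 1$. So if $N$ is large, some single black cluster $\mathfrak{C}^*_a$ with $a \in S$ has at least $k := \lceil N/9^d\rceil - 1$ sites. In that case there is a $*$-connected set $\Gamma \ni a$ of exactly $k$ sites, all black.

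\emph{Counting.} The number of $*$-connected sets of size $k$ containing a given site is at most $(e\,3^d)^k =: \kappa_d^k$. It remains to bound the probability that a fixed such $\Gamma$ is all black. The event $U_{\mathbf{z}+Rw,R}$ depends only on $X_\lam$ in $\mathbf{z}+Rw+[-5R,5R]^d$. Hence any two sites of $\Gamma$ at $l^\infty$-distance at least $11$ have independent states. Pick greedily a subset $\Gamma' \subset \Gamma$ whose points are pairwise at distance at least $11$, with $|\Gamma'| \ge k/21^d$. By \cref{white likely} and independence,
\[
\P^\lam(\Gamma \text{ all black}) \le \P^\lam(\Gamma' \text{ all black}) \le \epsilon^{k/21^d}
\]
uniformly in $\lam \ge \lam_0$ and $\mathbf{z}$. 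A union bound over $a \in S$ and over $\Gamma$ then gives
\[
q_N \le 3^d\bigl(\kappa_d\,\epsilon^{1/21^d}\bigr)^{k}.
\]

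\emph{Choice of $R$.} Choose $\epsilon = \epsilon(d)$ small enough that $\kappa_d\,\epsilon^{1/21^d} \le e^{-1}$. Then $q_N \le 3^d \exp(-N/9^d + 2)$ for all $N$; for small $N$ we just use $q_N \le 1$. Set $R = R_0(\epsilon,\lam_0,d)$ from \cref{white likely}, so $R$ depends only on $d$ and $\lam_0$. With this choice, $q_N^{1/d}$ decays exponentially in $N$ at a rate depending only on $d$. Therefore $\sum_N N^{d+2} q_N^{1/d} \le B(d)$, uniformly in $\lam \ge \lam_0$.

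\emph{Main obstacle.} The step needing care is the uniformity. It rests on \cref{white likely} being uniform in $\lam\ge\lam_0$, which in turn relies on the uniform version of \cref{largest component in metric diameter}. A second point is that black sites are only finitely dependent, not independent, which is why the sparse subset $\Gamma'$ is extracted before applying the $\epsilon$ bound. Everything else is standard Peierls bookkeeping.
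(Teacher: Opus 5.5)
Your proof is correct. It reaches the same exponential tail as the paper, $\P^\lam(Y_{\mathbf{z}}\ge N)\le Ce^{-cN}$ with $C,c$ depending only on $d$ and uniform in $\lam\ge\lam_0$ and $\mathbf{z}$, but it handles the dependence between sites differently.

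\textbf{The paper's route.} The paper fixes $\delta=1/(4\cdot 3^d)$ and invokes the Liggett--Schonmann--Stacey domination-by-product-measures theorem. This lets it choose $\epsilon$ so that the finitely dependent black/white field dominates independent site percolation with black density $\delta$. Each $|\mathfrak{C}^*_a|$ is then stochastically dominated by an independent $*$-cluster, whose tail is a routine animal count.

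\textbf{Your route.} You work directly with the dependent field. A fixed $*$-animal $\Gamma$ of size $k$ contains a greedily chosen subset $\Gamma'$ of at least $k/21^d$ sites at mutual $l^\infty$-distance at least $11$. The boxes $\mathbf{z}+Rw+[-5R,5R]^d$ for $w\in\Gamma'$ are pairwise disjoint, so the black events are mutually independent, and $\P^\lam(\Gamma\text{ all black})\le\epsilon^{k/21^d}$. You should cite this disjointness of the regions, which gives mutual independence, rather than the weaker statement that any two far-apart sites are independent.

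\textbf{What each buys.} Your argument is self-contained. It needs only finite-range dependence, the general $(e\Delta)^k$ count for connected sets in a graph of maximum degree $\Delta=3^d-1$, and \cref{white likely}. It uses no black-box comparison theorem, and it does not rely on the $(2\cdot 3^d)^k$ animal count the paper quotes. The price is a far smaller $\epsilon$, of order $(e^2 3^d)^{-21^d}$. This is immaterial, since the lemma only asserts that $R$ and $B$ exist. The paper's route gives cleaner explicit constants once LSS is granted, and it makes the reduction to the independent case conceptually transparent.

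The rest of your bookkeeping is fine: the factor $3^d$ lost in passing from $\overline{\mathcal{C}}(\mathbf{z})$ to $\mathcal{C}(\mathbf{z})$, the pigeonhole over $S$ giving a cluster of size at least $\lceil N/9^d\rceil-1$, and the trivial bound $q_N\le 1$ for small $N$.
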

\begin{proof}
First let us establish a bound on the tail of $Y_{\mathbf{z}}$ for a sufficiently large choice of $R$.

First, set $\delta = 1/(4 \cdot 3^d)$,
so in particular $0 < \delta < 1/(2 \cdot 3^d)$.
Then, using domination by product measures (\cite{LSS1997}, Theorem 0.0.(ii)),
choose $\epsilon > 0$
sufficiently small that any $11d$-dependent field on $\Z^d$ with $\Prob(w \mbox{ is white}) \ge 1 - \epsilon$
dominates an independent site percolation
on $\Z^d$ with $\Prob(w \mbox{ is white}) = 1-\delta$.
We then choose $R(\epsilon,\lam_0,d)$
as in \cref{white likely}
and perform the construction of $\overline{\mathcal{C}}(\mathbf{z})$ with parameter $R$.
In particular, we note that
by our choice of $R$,
for any $a \in \Z^d$, $|\mathfrak{C}_a^*|$
is stochastically dominated by the law of the size of the $*$-connected cluster of black sites at the origin of a Bernoulli site percolation with $\Prob(0 \mbox{ is black}) = \delta$.

Now, to bound the tail of $Y_{\mathbf{z}}$, note that
\begin{align*}
    Y_\mathbf{z} =
    |\overline{\mathcal{C}}(\mathbf{z})|
    \le 3^d |\mathcal{C}(\mathbf{z})| 
    \le 3^d(1 + \sum_{a \in S} |\mathfrak{C}_a^*|),
\end{align*}
so a union bound gives
\begin{align*}
    \Prob(Y_\mathbf{z} \ge N)
    &\le  
    \Prob( \sum_{a \in S} |\mathfrak{C}_a^*| \ge (N/3^d) - 1 ) \\
    &\le \sum_{a \in S} 
    \Prob( |\mathfrak{C}_a^*| \ge (N/3^{2d}) - 3^{-d} ) \\
    &\le 3^d \sum_{k=\lceil (N/3^{2d}) - 1 \rceil}^\infty [(2 \cdot 3^d) \delta]^k,
\end{align*}
where the last bound estimates the tail of the size of an independent Bernoulli site percolation cluster using the fact that the number of $*$-connected subsets of $\Z^d$
of size $k$ containing the origin is at most $(2 \cdot 3^d)^k$. Now, by our choice of $\delta$, this is decaying exponentially in $N$; let us say
\[
    \Prob(Y_\mathbf{z} \ge N)
    \le C e^{-c N},
\]
where $C,c$ now only depend on $d$.

Hence we compute
\begin{align*}
        \sum_{N=0}^\infty N^{d+2} q_N^{1/d} &\le 
        C \sum_{N=0}^{\infty} N^{d+2} e^{-(c/d)N}
        \le B,
\end{align*}
where $B$ is a function of $C, c,$ and $d$, and hence
actually only depends on $d$.
\end{proof}

Finally, to make use of
\cref{lattice animal bounds} effectively
we must bound the tails of 
$|\widetilde{\Pi}|$.
As a technical point, in order to apply
\cref{lattice animal bounds}, we need a random path
starting from $0$, so we define a new 
random path $\widehat{\Pi}$
by $\gamma \cup \widetilde{\Pi}$,
where $\gamma$ is a shortest path in $\Z^d$
from $0$ to the first point of $\widetilde{\Pi}$.
\begin{lem} \label{geodesic is not too long}
    There exist constants $C_1,C_2, c_3$ depending only
    on $\lam_0$ and $d$ such that
    for all $t \ge C_1\|x\|$ we have
    \[
        \P(\widehat{\Pi} \ge t) \le C_2 \exp(-c_3 t^{.99}).
    \]
\end{lem}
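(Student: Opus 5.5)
The plan is to bound $|\widehat{\Pi}|$ deterministically by a linear function of $\widetilde{D}_\lam(0,x)$, and then import the tail bound of \cref{bound on tails of D}, since $\widetilde{D}_\lam(0,x)$ is dominated by the quantity controlled there.

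\textbf{Step 1: a deterministic bound on $|\widehat{\Pi}|$.} The path $\widehat{\Pi}$ has two pieces. The first is $\gamma$, which runs from $0$ to a cube meeting $\widetilde{\pi}$ near $\widetilde{0}$, so $|\gamma| \le \|\widetilde{0}\|_\infty + 2$. The second is $\widetilde{\Pi}$. Consecutive points of $\widetilde{\pi}$ are within Euclidean distance $1$, so each edge of $\widetilde{\pi}$ meets at most a bounded number $K_d$ of unit cubes $Q(\mathbf{z})$. Hence
\[
|\widetilde{\Pi}| \le K_d(|\widetilde{\pi}|+1).
\]
This also shows that consecutive cubes along $\widetilde{\pi}$ are $*$-adjacent, so $\widehat{\Pi}$ is a $*$-path as required in \cref{lattice animal bounds}. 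Combining the two pieces, and using $M \ge 1$ for large $\|x\|$,
\[
|\widehat{\Pi}| \le K_d\bigl(|\widetilde{\pi}| + M\|\widetilde{0}\| + M\|x-\widetilde{x}\|\bigr) + C = K_d\,\widetilde{D}_\lam(0,x) + C .
\]
In the degenerate case where $\widetilde{\pi}$ is empty, $\widehat{\Pi}$ is just $\{0\}$ and the bound is trivial.

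\textbf{Step 2: domination of $\widetilde{D}$.} Take $p = \mathring{0}$ and $q = \mathring{x}$ in the infimum defining $\widetilde{D}_\lam(0,x)$; both lie in $C_\infty$, so they are connected. This gives
\[
\widetilde{D}_\lam(0,x) \le D_\lam(0,x) + M\bigl(\|\mathring{0}\| + \|x-\mathring{x}\|\bigr).
\]

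\textbf{Step 3: conclusion.} By Steps 1 and 2,
\[
\P\bigl(|\widehat{\Pi}| \ge t\bigr) \le \P\Bigl(D_\lam(0,x) + M\bigl(\|\mathring{0}\|+\|x-\mathring{x}\|\bigr) \ge \tfrac{t - C}{K_d}\Bigr).
\]
Apply \cref{bound on tails of D} at level $(t-C)/K_d$. This is allowed once $t \ge C_1\|x\|$ with $C_1$ enlarged by the factor $K_d$ plus a constant. The resulting bound $C_2\exp(-c_3((t-C)/K_d)^{.99})$ is at most $C_2'\exp(-c_3' t^{.99})$ after adjusting constants, uniformly in $\lam \ge \lam_0$.

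The only real care needed is the cube-counting constant $K_d$ in Step 1 and the handling of the connecting segment $\gamma$; neither should be a genuine obstacle.
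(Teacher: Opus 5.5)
Your proposal is correct and follows the paper's own proof. It bounds $|\widehat{\Pi}|$ deterministically by $\widetilde{D}_\lam(0,x)$ plus a constant, then dominates $\widetilde{D}_\lam(0,x)$ by $D_\lam(0,x) + M(\|\mathring{0}\| + \|x - \mathring{x}\|)$ via $p = \mathring{0}$, $q = \mathring{x}$, and invokes \cref{bound on tails of D} after adjusting constants; your cube-counting factor $K_d$ and the $*$-adjacency check, in place of the paper's cruder $|\widetilde{\Pi}| \le |\widetilde{\pi}|$, only change the constants and are slightly more careful.
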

\begin{proof}
    First, we can use the crude bound
    \begin{align*}
        |\widetilde{\Pi}| \le |\widetilde{\pi}|
    \end{align*}
    and
    \begin{align*}
        |\gamma| \le \sqrt{d}(\|\widetilde{0}\|+1)
    \end{align*}
    to get (assuming that $\|x\|$ and hence $M$ is sufficiently large)
    \begin{align*}
        |\widehat{\Pi}| \le \sqrt{d}(\|\widetilde{0}\|+1)
        +|\widetilde{\pi}|
        \le M\|\widetilde{0}\| + |\widetilde{\pi}|
        +M\|x-\widetilde{x}\| + \sqrt{d} = \widetilde{D}_\lam(0,x) +\sqrt{d}.
    \end{align*}
    Thus, it suffices to bound the tails of $\widetilde{D}_\lam(0,x)$.
    But since we also have the bound
    $\widetilde{D}_\lam(0,x) \le D_\lam(0,x) + M(\|\ring{0}\|+\|x-\ring{x}\|)$,
    our desired statement now follows directly from 
    \cref{bound on tails of D}.
\end{proof}

Finally we can get the desired linear bound on our 
random sum:
\begin{lem} \label{linear bound on sum}
    Let $\lam_0 > \lam_c$. 
    Choose $R$ as in \cref{Y moment bound} and define
    $Y_{\mathbf{z}}$ as in 
    \cref{justify Yz}
    with parameter $R$.
    Then there exists
    $B' < \infty$ depending only on $\lam_0$ and $d$
    such that for every $\lam \ge \lam_0$ we have
    \begin{align*}
        \E^{\lam}\left[ 
        \sum_{\mathbf{z} \in \widetilde{\Pi}} Y_{\mathbf{z}}
        \right]
        \le B' \|x\|
    \end{align*}
    whenever $\|x\| \ge 1$.
\end{lem}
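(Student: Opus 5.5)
We will apply \cref{lattice animal bounds} to the family $(Y_{\mathbf{z}})_{\mathbf{z}\in\Z^d}$ and the random lattice path $\widehat{\Pi}$. Since $Y_{\mathbf{z}} \ge 0$ and $\widetilde{\Pi} \subset \widehat{\Pi}$, we have $\sum_{\mathbf{z}\in\widetilde{\Pi}} Y_{\mathbf{z}} \le \sum_{\mathbf{z}\in\widehat{\Pi}} Y_{\mathbf{z}}$, so it suffices to bound the latter expectation.

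The hypotheses of \cref{lattice animal bounds} have already been verified, uniformly in $\lam \ge \lam_0$.
\begin{itemize}
\item The independence hypothesis, with $A = 30dR$ depending only on $d$ and $\lam_0$ (through $R$), is \cref{independence}.
\item The summability condition $\sum_N N^{d+2} q_N^{1/d} \le B$, with $B$ depending only on $d$, is \cref{Y moment bound}.
\end{itemize}
Hence the constant $C$ produced by \cref{lattice animal bounds} depends only on $d$ and $\lam_0$.

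We must also check that $\widehat{\Pi}$ is a path in the required sense: it starts at $0$ and consecutive sites are at $l^\infty$-distance one. The segment $\gamma$ is a nearest-neighbour path. For $\widetilde{\Pi}$, consecutive points of $\widetilde{\pi}$ are at Euclidean distance at most $1$, so the unit cubes containing them are equal or $*$-adjacent. Ordering $\widetilde{\Pi}$ by the order in which $\widetilde{\pi}$ visits the cubes therefore gives a $*$-path, possibly with repetitions. Repetitions only increase the sum $\sum Y_{\mathbf{z}}$ and increase $|\widehat{\Pi}|$ by at most a factor of the length of $\widetilde{\pi}$. The bound $|\widehat{\Pi}| \le \widetilde{D}_\lam(0,x)+\sqrt d$ in \cref{geodesic is not too long} already counts vertices of $\widetilde{\pi}$, so its tail bound applies to this parametrised path as well. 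This is the one point that needs care in the write-up.

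We then take $L = \lceil C_1 \|x\| \rceil$, with $C_1$ from \cref{geodesic is not too long}, and obtain
\[
\E^\lam\Big[\sum_{\mathbf{z}\in\widehat{\Pi}} Y_{\mathbf{z}}\Big] \le CL + C\sum_{\ell\ge L} \ell\, \P(|\widehat{\Pi}|=\ell)^{1/2} \le C(C_1\|x\|+1) + C\sqrt{C_2}\sum_{\ell \ge L} \ell\, e^{-c_3\ell^{.99}/2}.
\]
The last sum is bounded by an absolute constant depending only on $c_3$, and hence only on $d$ and $\lam_0$. Using $\|x\| \ge 1$ to absorb the additive constants gives the bound $B'\|x\|$.

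The main obstacle is uniformity in $\lam$ of the tail bound in \cref{geodesic is not too long}. That bound rests on \cref{bound on tails of D}, whose constants depend only on $\lam_0$ and $d$ via \cref{Yao deviation of chemical distance} and the scaling argument there. If it is only valid for $\|x\|$ large, small $\|x\|$ must be handled separately. For $\|x\|$ bounded we use the deterministic bound $|\widehat{\Pi}| \le M\|x\| + \sqrt d + O(1)$ together with $\E Y_{\mathbf{z}} \le B$, which gives a constant bound that is absorbed into $B'$.
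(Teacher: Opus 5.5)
Your proposal is correct and is essentially the paper's proof: pass to $\widehat{\Pi}$, obtain the hypotheses of \cref{lattice animal bounds} from \cref{independence} and \cref{Y moment bound}, and feed in the tail bound of \cref{geodesic is not too long} with $L \approx C_1\|x\|$. Your extra care (making $\widehat{\Pi}$ an honest $*$-path, keeping the factor $\ell\,\P(|\widehat{\Pi}|=\ell)^{1/2}$, and handling bounded $\|x\|$ separately) fills small gaps the paper passes over. One fix is needed for bounded $\|x\|$: a size bound on the random set $\widehat{\Pi}$ together with $\E Y_{\mathbf{z}} \le B$ does not by itself control the expectation, so instead use that $\widehat{\Pi}$ is deterministically confined to a bounded box, or apply \cref{lattice animal bounds} with $L$ above the deterministic bound on $|\widehat{\Pi}|$.
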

\begin{proof}
    Since the summands are all nonnegative
    and by construction $\widetilde{\Pi} \subset \widehat{\Pi}$,
    it suffices to show the same bound
    with $\widetilde{\Pi}$ replaced by $\widehat{\Pi}$.

    Choosing $R$ and $B$ based on $\lam_0$ and $d$ as in \cref{Y moment bound},
    \cref{independence} provides us with a parameter $A$
    so that the family 
    $\{Y_{\mathbf{z}}\}_{\mathbf{z} \in \Z^d}$
    satisfies the hypotheses of \cref{lattice animal bounds}.
    Taking $C_1$ as in \cref{geodesic is not too long}
    and setting $L = C_1 \|x\|$, applying \cref{lattice animal bounds} then gives
    \begin{align*}
        \E^{\lam}\left[ 
        \sum_{\mathbf{z} \in \widetilde{\Pi}} Y_{\mathbf{z}}
        \right]
        \le
        C\left(
        C_1 \|x\|
        + C_2 \sum_{t= \lfloor C_1 \|x\| \rfloor}^{\infty}
        \exp(-c_3 t^{.99})
        \right),
    \end{align*}
    for some $C$ only depending on $\lam_0$ and $d$ (through $A$ and $B$).
    Then we can choose $B'$ depending only 
    on $C, C_1, C_2, c_3$ (hence only on $\lam_0$ and $d$)
    such that the above is bounded by
    $B'\|x\|$ whenever $\|x\| \ge 1$.
\end{proof}

\section{Lipschitz continuity of the time constant}

Finally, we can prove \cref{Lipschitz continuity}.

\begin{proof} [Proof of \cref{Lipschitz continuity}]

First let us prove a Lipschitz bound for $\widetilde{D}_\lam$. 
Recall that,
since $\widetilde{D}_\lam(0,x)$
is bounded and only depends on $X_\lam$ restricted 
to the finite volume region $R(\widetilde{D}) \subset \R^d$,
its expectation is differentiable by \cref{Russo's formula}.
Fixing $\lam_0>\lam_c$,
for any $\lam_0 \le \lam < \lam ' < \infty$, we then have

\begin{align*}
    0 &\le \E \widetilde{D}_\lam(0,x) - \E \widetilde{D}_{\lam'}(0,x) \\
    &=\int^{\lam'}_\lam - \frac{d}{d\ell} \E \widetilde{D}_\ell(0,x)d\ell \\
    &= \int^{\lam'}_\lam \frac{1}{\ell}\E\left[\sum_{q \in X_\ell \cap R(\widetilde{D})} \Delta_q f(X_\ell) \right] d\ell \\
    &\le \int^{\lam'}_\lam \frac{1}{\ell}  \E\left[\sum_{q \in \widetilde{\pi}} \1 \left\{{\substack{G_\ell \setminus \{q\} \\ p(q) \leftrightarrow s(q)}}\right\} d_{G_\ell \setminus \{q\}} (p(q),s(q)) \right] d\ell + 
    \left( \int_{\lam}^{\lam'} \ell d\ell \right) o(\|x\|)\\
    &\le \int^{\lam'}_{\lam} \frac{1}{\ell} C'_{d,R} \E^{\ell}\left[\sum_{\mathbf{z} \in \widetilde{\Pi}} Y_\mathbf{z}\right] d\ell + (\lam')^2 o(\|x\|) \\
    &\le (C_{d,R}'/\lam_0) B' \|x\| |\lam' - \lam| + (\lam')^2 o(\|x\|).
\end{align*}

The equality in the  third line is from \cref{Russo's formula}
(recalling the definitions of $f$ and $\Delta_q$
from the beginning of \cref{sec:derivative bound}); 
the fourth line is from \cref{bounding influence by detours}.
In the fifth line,
we have chosen $R$ based on $\lam_0$ and $d$ as specified by \cref{Y moment bound},
and defined $Y_{\mathbf{z}}$
with parameter $R$
as in \cref{justify Yz};
the fifth line then follows from \cref{justify Yz},
and the final bound from \cref{linear bound on sum}.

Therefore we have

\begin{align*}
    0 \le \mu_\lam - \mu_{\lam'} &= \lim_{x \rightarrow \infty} \frac{\E D_{\lam} (0,x) - \E D_{\lam'}(0,x)}{\|x\|} \\
    &= \lim_{x \rightarrow \infty} \frac{\E \widetilde{D}_\lam (0,x) - \E \widetilde{D}_{\lam'}(0,x)}{\|x\|} \\
    &\le \limsup_{x \rightarrow \infty}
    \frac{(C_{d,R}'/\lam_0) B' |\lam' - \lam| \|x\| + (\lam')^2o(\|x\|)}{\|x\|} \\
    &= (C_{d,R}'/\lam_0) B' |\lam' - \lam|. 
\end{align*}
The second line is from \cref{approx of D_lam},
and the third line is the last inequality we derived.
Thus, the function $\lam \mapsto \mu_\lam$
is Lipschitz on $[\lam_0, \infty)$ with constant $(C_{d,R}'/\lam_0) B'$.

Note also that, if instead of using the bound $(1/\ell) \le (1/\lam_0)$ above, we compute the integral, we obtain 
a bound of
$C'_{d,R} B' \log(\lam'/\lam)$.
Hence $\mu_{\lam}$
is Lipschitz as a function of \emph{the logarithm of $\lam$},
that is,
$t \mapsto \mu_{e^t}$
is Lipschitz in $t$ on $[t_0,\infty)$
for $t_0 > \log \lam_c$.
\end{proof}

\section*{Acknowledgments}
K.D. is partially supported by NSF grant DMS-2246624. 

\bibliographystyle{abbrv}
\bibliography{refs}

\appendix
\section{A Russo-type formula} \label{appendix}

Throughout this section, 
denote by $X_{\lambda}$ a Poisson point process of intensity $\lambda$
on $K$, where $K$ is a finite volume 
Borel subset of $\R^d$,
and let 
$f: \{ F \subset K : F \mbox{ finite } \} \to \R$
be a bounded measurable function.

\begin{prop}[A Russo-type formula]\label{Russo's formula} 
   For every $\lambda > 0$,
   $\frac{d}{d\lambda} \E f(X_{\lambda})$ exists
   and we have
   \[\frac{d}{d\lambda} \E f(X_{\lambda}) = 
   \frac{1}{\lambda}
   \E\left[ \sum_{x \in X_{\lambda}} [f(X_{\lambda}) - f(X_{\lambda} \setminus \{x\})] \right].
   \]
\end{prop}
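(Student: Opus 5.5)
The plan is to write the expectation explicitly using the Poisson structure on the finite-volume set $K$ and then differentiate term by term. Let $V := \mathrm{vol}(K) < \infty$. Conditionally on $|X_\lam| = n$, the points of $X_\lam$ are i.i.d. uniform on $K$. Hence
\[
\E f(X_\lam) = \sum_{n=0}^\infty e^{-\lam V}\frac{(\lam V)^n}{n!}\, a_n, \qquad a_n := \E f(\{U_1,\dots,U_n\}),
\]
where the $U_i$ are i.i.d. uniform on $K$. Since $|a_n| \le \|f\|_\infty$, this is a power series in $\lam$ times $e^{-\lam V}$ with bounded coefficients $a_n V^n/n!$. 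It therefore has infinite radius of convergence and can be differentiated term by term. This settles existence of the derivative.

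Differentiating gives
\[
\frac{d}{d\lam}\E f(X_\lam) = \sum_{n\ge 0} e^{-\lam V}\frac{(\lam V)^n}{n!}\Big(\frac{n}{\lam} - V\Big) a_n
= \frac{1}{\lam}\sum_{n} p_n\, n\, a_n - V\sum_n p_n a_n,
\]
where $p_n = e^{-\lam V}(\lam V)^n/n!$. The first term is $\frac{1}{\lam}\E[|X_\lam| f(X_\lam)] = \frac1\lam \E\big[\sum_{x\in X_\lam} f(X_\lam)\big]$. The remaining task is to identify the second term: I will show that
\[
V\,\E f(X_\lam) = \frac{1}{\lam}\E\Big[\sum_{x\in X_\lam} f(X_\lam\setminus\{x\})\Big].
\]
This follows from \cref{Palm theory} with $k=1$, applied to the function $g(x,X) = \ind_K(x) f(X\setminus\{x\})$, splitting $f$ into positive and negative parts to meet the nonnegativity hypothesis. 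The Palm formula gives
\[
\E\sum_{x\in X_\lam} f(X_\lam\setminus\{x\}) = \lam\int_K \E f\big((X_\lam\cup\{x\})\setminus\{x\}\big)\,dx = \lam V\,\E f(X_\lam),
\]
since almost surely $x\notin X_\lam$ for each fixed $x$. Alternatively, the same identity can be checked directly from the mixed representation, using $n\,a_{n-1}p_n = \lam V p_{n-1}a_{n-1}$. Combining the two terms gives the stated formula.

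The only mildly delicate step is justifying the interchange of derivative and sum, together with integrability of $\sum_{x} |f(\cdot)|$. Both are handled by the bound $\|f\|_\infty$ times $|X_\lam|$, which has finite mean $\lam V$. The term-by-term differentiation is dominated uniformly for $\lam$ in compact subsets of $(0,\infty)$, since $\sum_n (n/\lam + V)p_n \|f\|_\infty$ converges locally uniformly. I expect no serious obstacle.
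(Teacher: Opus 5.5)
Your proof is correct, but after the first step it takes a different route from the paper. You and the paper both get existence of the derivative from the mixture representation $\E f(X_\lam)=\sum_k \E f(U_{[k]})\,e^{-\lam V}(\lam V)^k/k!$ with $V=|K|$.

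The paper then identifies the derivative by a coupling argument. It realizes $X_{\lam-\epsilon}$ as the thinning $X_\lam\setminus X_{\lam,\epsilon/\lam}$ and discards the event that two or more points are removed, an $O(\epsilon^2)$ contribution. It computes the one-point-removal term by conditioning on $X_\lam$, then passes to the limit by dominated convergence. Differentiability is known a priori, so only a one-sided difference quotient has to be evaluated.

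You instead differentiate the series explicitly and obtain $\frac1\lam\E[|X_\lam|f(X_\lam)]-V\,\E f(X_\lam)$, which is just $\frac1\lam\mathrm{Cov}(|X_\lam|,f(X_\lam))$. You then rewrite the second term as $\frac1\lam\E\sum_{x\in X_\lam}f(X_\lam\setminus\{x\})$. This uses either the Mecke formula (\cref{Palm theory} with $k=1$, applied to $f^\pm$ separately) or your identity $np_n=\lam V p_{n-1}$ together with exchangeability.

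What each route buys:
\begin{itemize}
\item Your route is shorter and purely algebraic. It gives the two-sided derivative in one stroke and needs no $O(\epsilon^2)$ estimate or dominated-convergence step beyond term-by-term differentiation of an entire function. It also reuses a tool the paper already states.
\item The paper's route is the standard Margulis--Russo style argument. It exhibits the derivative directly as the expected total single-point influence under a monotone coupling. That is closer in spirit to the discrete Russo formula and to how the formula is used in \cref{sec:derivative bound}, at the cost of the extra limiting estimates.
\end{itemize}

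One wording fix: what gives infinite radius of convergence is $|a_n|\le\|f\|_\infty$, so the coefficients are $O(V^n/n!)$. Mere boundedness of $a_nV^n/n!$ would only give radius at least $1$.
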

\begin{proof}
    First, we show that
    $\E f(X_{\lambda})$ is
    differentiable in $\lambda$.
    To see this, first note
    that, since $K$ has finite
    volume, $X_{\lambda}$ can
    be sampled by first sampling
    $N \sim \mathrm{Poi}(\lambda |K|)$
    and then letting
    $X_{\lambda} = U_{[N]}$,
    where $U_1,U_2,...$
    is a sequence of independent random variables sampled according to the uniform measure on $K$ and $U_{[k]} := \bigcup_{i=1}^k \{U_i\}$.
    Therefore we can write
    \begin{align*}
    \E f(X_\lambda)
    &= \sum_{k = 0}^{\infty} \E[f(X_{\lambda}) | N = k] 
    \Prob( N = k) \\ 
    &= \sum_{k=0}^{\infty}
    \E[f(U_{[k]})] 
    e^{- \lambda |K|}
    \frac{(\lambda |K|)^k}{k!}.
    \end{align*}

    Note that $\E[f(U_{[k]})]$
    is independent of $\lambda$,
    so each term of this series 
    is differentiable in $\lambda$.
    Moreover, 
    (since $\E f(U_{[k]}) \le \|f\|_{\infty}$) it is
    straightforward to check that the
    sum of the termwise derivatives
    converges uniformly on 
    compact subsets of $[0,\infty)$;
    therefore $\E f(X_\lambda)$
    is differentiable on
    $(0,\infty)$ as desired.

    Now recall the following coupling for $X_\lam$ and $X_{\lam - \epsilon}$:
    Given $\eta \in (0,1)$, let
    $X_{\lambda, \eta}$
    be the random subset of $X_{\lambda}$ where each
    element of $X_{\lambda}$ is independently
    retained with probability $\eta$
    and deleted with probability 
    $1 - \eta$.
    Then $X_{\lambda, \eta}$
    is a Poisson point process
    on $K$ with intensity
    $\eta \lambda$
    and 
    $X_{\lambda} \setminus 
    X_{\lambda, \eta}$
    is a Poisson point process
    on $K$ with intensity
    $(1 - \eta)\lambda$.
    In particular,
    if we take 
    $\eta = \frac{\epsilon}{\lambda}$, we see that
    $X_{\lambda} \setminus 
    X_{\lambda, \frac{\epsilon}{\lambda}}$ is equal
    in distribution 
    $X_{\lambda - \epsilon}$.
    Therefore we have
    \begin{align*}
        \frac{d}{d\lambda}
        \E f(X_{\lambda})
        &=
        \lim_{\epsilon \to 0^+}
        \frac{\E f(X_{\lambda}) 
        - \E f(X_{\lambda - \epsilon})}
        {\epsilon} \\ 
        &= 
        \lim_{\epsilon \to 0^+}
        \frac{\E[ f(X_{\lambda}) 
        - f(X_{\lambda} 
        \setminus X_{\lambda,\frac{\epsilon}{\lambda}})]}
        {\epsilon}.
    \end{align*}
    
    Next, note that
    \begin{align*}
       \E \left[
       |f(X_{\lambda}) - 
       f(X_{\lambda} \setminus X_{\lambda, \frac{\epsilon}{\lambda}})| 
       \1_{\{|X_{\lambda, \frac{\epsilon}{\lambda}}| \ge 2\}}
       \right]
       \le& 2 \|f\|_{\infty} \Prob( |X_{\lambda, \frac{\epsilon}{\lambda}}| \ge 2) \\
       =& 2 \|f\|_{\infty} \Prob( |X_{\epsilon}| \ge 2) \\
       =& O(\epsilon^2).
    \end{align*}


    Therefore, in order to compute the desired limit,
    it suffices to consider
    \[
    \E \left[ [f(X_{\lambda}) - f(X_{\lambda} \setminus X_{\lambda, \frac{\epsilon}{\lambda}})] 
    \ind_{\{ |X_{\lambda, \frac{\epsilon}{\lambda}}|
    \le 1\}} \right] =
    \E \left[ [f(X_{\lambda}) - f(X_{\lambda} \setminus X_{\lambda, \frac{\epsilon}{\lambda}})] 
    \ind_{\{ |X_{\lambda, \frac{\epsilon}{\lambda}}|
    = 1 \}} \right],
    \]
    where for the above equality we used the fact
    that $f(X_{\lambda}) - f(X_{\lambda} \setminus X_{\lambda, \frac{\epsilon}{\lambda}}) = 0$
    on the event $\left\{|X_{\lambda, \frac{\epsilon}{\lambda}}| = 0 \right\}$.

    Now, conditioning on $X_{\lambda}$ we can compute
    \begin{align*}
        \E \left[ [f(X_{\lambda}) - f(X_{\lambda} \setminus X_{\lambda, \frac{\epsilon}{\lambda}})] 
        \ind_{\{ |X_{\lambda, \frac{\epsilon}{\lambda}}|
        = 1 \}} \middle| X_{\lambda} \right]
        &= 
        \sum_{S \subset X_{\lambda}}
        (f(X_{\lambda}) - f(X_{\lambda} \setminus S))
        \ind_{\{ |S| = 1 \}} \Prob( X_{\lambda, \frac{\epsilon}{\lambda}} = S | X_{\lambda}) \\
        &=
        \left(\frac{\epsilon}{\lambda} \right)
        \left(1 - \frac{\epsilon}{\lambda}\right)^{|X_{\lambda}| - 1}
        \sum_{x \in X_{\lambda}} 
        [f(X_{\lambda}) - f(X_{\lambda} \setminus \{x\})].
    \end{align*}

    Thus, we have
    \begin{align*}
        \frac{d}{d\lambda} \E f(X_{\lambda}) &=
        \lim_{\epsilon \to 0^+}
        \frac{1}{\epsilon} 
        \E \left[ [f(X_{\lambda}) - f(X_{\lambda} \setminus X_{\lambda, \frac{\epsilon}{\lambda}})] 
        \ind_{\{ |X_{\lambda, \frac{\epsilon}{\lambda}}|
        = 1 \}} \right] \\
        &=
        \lim_{\epsilon \to 0^+}
        \frac{1}{\lambda}
        \E \left[
        \left(1 - \frac{\epsilon}{\lambda}\right)^{|X_{\lambda}| - 1}
        \sum_{x \in X_{\lambda}} 
        [f(X_{\lambda}) - f(X_{\lambda} \setminus \{x\})]
        \right] \\
        &=
        \frac{1}{\lambda}
        \E \left[
        \sum_{x \in X_{\lambda}} 
        [f(X_{\lambda}) - f(X_{\lambda} \setminus \{x\})]
        \right],
    \end{align*}
    where the last equality follows from the
    Dominated Convergence Theorem,
    since the random variables being integrated
    are dominated by the integrable
    random variable $4\|f\|_{\infty}|X_\lam|$
    for $0 < \epsilon < \lam/2$.
\end{proof}

\end{document}